\numberwithin{equation}{section}
\newtheorem{theorem}{Theorem}[section]
\newtheorem{lemma}{Lemma}[section]
\newtheorem{proposition}{Proposition}[section]
\newtheorem{remark}{Remark}
\newcommand{\calD}{\mathcal{D}}
\newcommand{\calH}{\mathcal{H}}
\newcommand{\calU}{\mathcal{U}}
\newcommand{\frh}{\mathfrak{h}}
\newcommand{\frt}{\mathfrak{t}}
\newcommand{\bbD}{\mathbb{D}}
\newcommand{\bbE}{\mathbb{E}}
\newcommand{\bbH}{\mathbb{H}}
\newcommand{\bbN}{\mathbb{N}}
\newcommand{\bbP}{\mathbb{P}}
\newcommand{\bbQ}{\mathbb{Q}}
\newcommand{\bbR}{\mathbb{R}}
\newcommand{\bbS}{\mathbb{S}}
\newcommand{\bbZ}{\mathbb{Z}}
\newcommand{\sfz}{{\sf z}}
\newcommand{\sfe}{{\sf e}}
\newcommand{\Zd}{\bbZ^d}
\newcommand{\Rd}{\bbR^d}
\newcommand{\B}{\mathbf{B}}
\newcommand{\sur}{\mathfrak{s}_h}
\newcommand{\fcone}{Y^>_\delta(h)}
\newcommand{\fconeh}[1]{Y^>_\delta(#1)}
\newcommand{\tfcone}{Y^>_{3\delta}(h)}
\newcommand{\tcone}{Y_{3\delta}(h)}
\newcommand{\bcone}{Y^<_\delta(h)}
\newcommand{\tbcone}{Y^<_{3\delta}(h)}
\newcommand{\dfcone}{Y^>_{2\delta}(h)}
\newcommand{\dbcone}{Y^<_{2\delta}(h)}
\newcommand{\lb}{\left(}
\newcommand{\rb}{\right)}
\newcommand{\lbr}{\left\{}
\newcommand{\rbr}{\right\}}
\newcommand{\step}[1]{S{\small TEP}\,#1}
\def\1{\ifmmode {1\hskip -3pt \rm{I}}
\else {\hbox {$1\hskip -3pt \rm{I}$}}\fi} 
\newcommand{\smallo}{{\mathrm o}}
\newcommand{\so}{\smallo (1)}
\newcommand{\df}{\stackrel{\Delta}{=}}
\newcommand{\Zn}{Z_n}
\newcommand{\Znf}[1]{Z_n^{#1}}
\newcommand{\Plf}[1]{\bbP_\lambda^{#1}}
\newcommand{\Pnf}[1]{\bbP_n^{#1}}
\newcommand{\Wf}[1]{W_{#1}}
\newcommand{\Wfr}[1]{\hat{W}_{#1}}
\newcommand{\Znx}{Z_{n,x}}
\newcommand{\xl}{\xi_\lambda}
\newcommand{\xln}{\xi_{\lambda_0}}
\newcommand{\Kl}{{\mathbf K}_\lambda}
\newcommand{\Kln}{{\mathbf K}_{\lambda_0}} 
\newcommand{\Ul}{{\mathbf U}_\lambda}
\newcommand{\Dl}[1]{\bbD_{#1}}
\newcommand{\Hl}[1]{\bbH_{#1}}
\newcommand{\be}{\begin{equation}}
\newcommand{\ee}{\end{equation}}
\begin{document}

\title{Ballistic Phase of Self-Interacting Random Walks }
\author{Dmitry Ioffe and Yvan Velenik\\[2mm]Technion and Universit\'e de Gen\`eve}
\maketitle

\begin{abstract}
We explain a unified approach to a study of ballistic phase for 
a large family of self-interacting random walks with a drift and 
self-interacting polymers with an external stretching force. The 
approach is based on a recent version of the Ornstein-Zernike theory
developed in \cite{CaIoVe1,CaIoVe2,CaIoVe3}. It leads to  local limit
results for various observables (e.g., displacement of the end-point
or number of hits of a fixed finite pattern) on paths of
 $n$-step walks (polymers) on all 
possible deviation  scales from CLT to LD. The class of models, which 
display ballistic phase in the ``universality class'' discussed 
in the paper, includes self-avoiding walks, Domb-Joyce model, random walks in 
an annealed random potential, reinforced polymers and weakly reinforced
random walks.
\end{abstract}
\section{Introduction and results}

Self-interacting polymers and random walks have received much attention by both physicists and probabilists. As the resulting models are non-markovian, their analysis requires new techniques, and many basic questions remain open.

\citet{Chayes} recently pointed out that the presence of an arbitrary non-zero drift turns a Self-Avoiding Walk on $\bbZ^d$ into a ``massive'' model, and used this observation to prove the existence of a positive speed, using renewal techniques of Ornstein-Zernike--type, in the spirit of~\citep{ChayesChayesSAW}. A similar approach was used by \citet{Flury} in order to study non-directed polymers in a quenched random environment, under the influence of a drift: after suitable reduction to an annealed setting (at the cost of considering two interacting copies of the polymer), he used the same Ornstein-Zernike approach in order to prove equality of quenched and annealed free energies. Notice that introducing a drift in such polymer models is very reasonable from the point of view of Physics, as it is equivalent to pulling the polymer with a constant force at one endpoint, the other being pinned.

Ornstein-Zernike renewal techniques have known considerable progress in recent years, allowing nonperturbative control of percolation connectivities~\citep{CaIo}, Ising correlation functions~\citep{CaIoVe1,CaIoVe2} and more generally connectivities in random cluster models~\citep{CaIoVe3}, in the whole subcritical regime.

The purpose of this note is to point out that 
the approach of \citep{CaIoVe1,CaIoVe2,CaIoVe3}
allows a unified and  a detailed treatment of self-interacting polymers
 or self-interacting walks in ballistic regime. In particular, we shall 
 explain how to use it in order to prove that,
\begin{itemize}
\item For self-interacting polymers with a repulsive interaction (see below), a positive drift gives rise to a ballistic phase, in a very strong sense (local limit theorem for the endpoint).
\item For self-interacting polymers with an attractive interaction (see below), there is a sharp transition between a confined phase and a ballistic phase as the drift increases; in the latter regime, we establish again a local limit theorem for the endpoint.
\item These behaviours are stable under small perturbations (in a sense defined below). In addition to allowing us to consider mixed attractive/repulsive models, this also makes it possible to prove local limit theorems for some models of reinforced random walk with drift, strengthening the recent results obtained by \citet{Remco} using the lace expansion.
\item In all cases mentioned, the local limit theorem can be readily complemented by a functional CLT for the ballistic path.
\item In the ballistic phase, these results can be complemented by sharp local limit theorems for general local observables of the path. This 
permits for example a strong version of Kesten's pattern 
Theorem~\citep{Kesten} in this regime.
\end{itemize}
We would also like to remark that
the approach could  be pushed to analyse the interaction of paths as,
e.g., considered in~\citep{Flury}, in order to analyse
random polymers in quenched environment.

 For the sake of readability, we make 
some simplifying assumptions in the sequel (for example, on the nearest neighbour nature of paths or on
the form of the local observables), but the technique 
is flexible enough to be applicable in numerous other situations.

\subsection{Class of models}
For each nearest neighbour path $\gamma = (\gamma (0), \dots ,\gamma (n))$
on $\Zd$ define:
\begin{itemize}
 \item Length $|\gamma | = n$ and 
 displacement  $D(\gamma ) = \gamma (n) -\gamma (0)$.
\item Local times: For a given a site $x\in\Zd$ set 
\[
 l_x (\gamma )\, = \, \sum_{k=0}^{|\gamma |} \1_{\{ \gamma (k) =x\}} .
\]
Similarly, one defines local times $l_b (\gamma )$ for either
un-oriented or oriented nearest neighbour bonds $b$.
\item Potential $\Phi (\gamma )$: 
In this paper
we shall concentrate on potentials $\Phi$ which depend only on bond
or edge local times of $\gamma$.  That is, 
\be 
\label{Potential}
\Phi (\gamma )\, =\, \sum_{x\in\Zd}\phi\lb l_x (\gamma )\rb\quad\text{or}
\quad 
\Phi (\gamma )\, =\, \sum_{b}\phi\lb l_b (\gamma )\rb .
\ee
\end{itemize}
To every $h\in\Rd$ and $\lambda\in\bbR$, we associate grand-canonical weights defined by
\be
\label{hlWeights}
\Wf{h ,\lambda} (\gamma )\, =\, 
{\rm e}^{-\Phi (\gamma ) + (h ,D (\gamma )) -\lambda |\gamma |} , 
\ee
where $(\cdot ,\cdot )$ is the usual scalar product on $\Rd$. 
If $h$ or $\lambda$ equal to zero, then the corresponding entry is dropped from
the notation.

The important assumptions are those imposed on the potential $\Phi$ in \eqref{Potential}.
In all the models we are going to consider, 
\smallskip

\noindent
{\bf (N)} $\phi (0) = 0$ and $\phi$ is a non-negative and non-decreasing 
function on $\bbN$.  
\smallskip

\noindent
Furthermore, we shall distinguish between the repulsive case, 
\smallskip

\noindent
{\bf (R)} For all $l,m\in\bbN$,  $\phi (l+m ) \geq \phi (l) +\phi (m)$;
\smallskip

\noindent
and the attractive case, 
\smallskip

 \noindent
{\bf (A)} For all $l,m\in\bbN$,  $\phi (l+m ) \leq \phi (l) +\phi (m)$.
\smallskip

\noindent
In the attractive setup, there is no loss of generality in restricting our attention to sublinear
potentials,
\smallskip

 \noindent
{\bf (SL)} $\lim_{n\to\infty}\phi (n) /n =0$.
\smallskip

\noindent
Finally, we shall consider small perturbations of the above two pure 
(attractive or repulsive) cases.

\noindent
Whichever model we are going to consider, the main object of our study 
is the canonical path measure, 
\[
 \Pnf{h}\lb \cdot \rb\, =\, \frac{\Wf{h}(\cdot )}{\Znf{h}} , 
\]
where, 
\[
 \Znf{h}\, =\, \sum_{\gamma (0)=0 ,|\gamma | =n} \Wf{h} (\gamma ) .
\]
Note that in the attractive case condition {\bf (SL)} simply boils down to a choice of normalisation for the partition functions $\Znf{h}$.
\subsection{Collection of examples}
Two main examples of repulsive interactions are
\begin{enumerate}
 \item Site or bond self-avoiding walks with $\phi (l) = \infty\cdot
\delta{\{ l> 1\}} $.
\item Domb-Joyce model with $\phi (l) = \beta l(l-1)/2$ .
\end{enumerate}
Two examples of attractive interactions are
\begin{enumerate} 
\item Annealed random walks in random potential, 
\[
 \phi (l)\, =\, -\log\bbE {\rm e}^{lV},
\]
where $V$ is a {\em non-positive} random variable. 
\item Edge or site reinforced polymers with
\[
 \phi (l)\, =\, \sum_1^l \beta_k , 
\]
where $\{\beta_k\}$ is a {\em non-negative} and {\em non-increasing} sequence.
\end{enumerate}
\subsection{Connectivity constants, Lyapunov exponents and Wulff shapes}  The 
connectivity constant
\[
 \lambda_0\, \df\, \lim_{n\to\infty}\frac{\log\Zn}{n}
\]
is well defined and finite 
 in both repulsive and attractive cases. Indeed, since $\phi$ is 
non-negative and non-decreasing, 
\[
 {\rm e}^{-\phi (1)n}\, \leq \, \Zn\, \leq\, (2d)^n .
\]
On the other hand, 
$Z_{n+m}\leq Z_n Z_m$ in the case of repulsion, whereas 
$Z_{n+m}\geq Z_n Z_m$ in the attractive case.

As we shall check in the Appendix it is always the situation in the attractive case (under the normalisation
{\bf (SL)}) that,
\be 
\label{lambdan}
\lambda_0\, =\, \log (2d) .
\ee

\smallskip

\noindent
There are two slightly different approaches to  Lyapunov exponents which we are
going to employ: For every $x\in \Zd$, let $\calD_x$ be the family of nearest
neighbour paths from $0$ to $x$. The family $\calH_x$ comprises those
paths from $\calD_x$ which hit $x$ for the first time.
 Formally, 
\[
 \calH_x\, =\, \lbr \gamma\in\calD_x\, :\, l_x (\gamma )=1\rbr .
\]
Define
\be 
\label{LyapExp}
\Dl{\lambda} (x)\, =\, \sum_{\gamma\in \calD_x}\Wf{\lambda}(\gamma )\quad
\text{and}\quad 
\Hl{\lambda} (x)\, =\, \sum_{\gamma\in \calH_x}\Wf{\lambda}(\gamma ) , 
\ee
where, as before, $\Wf{\lambda}(\gamma ) = 
{\rm e}^{-\Phi (\gamma ) -\lambda |\gamma|}$. 
Notice that
\be 
\label{HD}
 \Hl{\lambda}(x)\,  \leq \, \Dl{\lambda}(x)\, \leq\, 
\sum_{n\geq \|x\|_1} \Zn {\rm e}^{-\lambda n} ,
\ee
which is converging for every $\lambda >\lambda_0$. 

\noindent
For $\lambda >\lambda_0$ define now
\be 
\label{xidef}
\xl (x)\, =\, -\lim_{M\to\infty}\frac1M\log \Hl{\lambda}(\lfloor Mx\rfloor)
\, =\, 
-\lim_{M\to\infty}\frac1M\log \Dl{\lambda}(\lfloor Mx\rfloor ).
\ee 
As we shall recall in the Appendix, 
\begin{proposition}
 \label{pro:LyapExp}
For every $\lambda >\lambda_0$ the function $\xl$ in \eqref{xidef} is well-defined.
Furthermore, $\xl$ is an equivalent norm on $\Rd$,  and
\be 
\label{xi:maxmin}
\max_{x\in\bbS^{d-1}}\xl (x)\, \leq\, c(d)
\min_{x\in\bbS^{d-1}}\xl (x) .
\ee
Finally, $\xln\df \lim_{\lambda\downarrow\lambda_0} \xl$ is 
identically zero in the repulsive case {\bf (R)}, 
whereas it is strictly positive in the attractive case {\bf (A)+(SL)}.
\end{proposition}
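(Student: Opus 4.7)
The plan is to establish existence of $\xl$ by a subadditivity argument built from path concatenation, identify it with an equivalent norm on $\Rd$, and then analyse the critical limit $\lambda\downarrow\lambda_0$ by contrasting the repulsive and attractive regimes.

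\emph{Existence and consistency of the limits.} The concatenation of $\gamma_1:0\to x$ in $\calD_x$ and $\gamma_2:0\to y$ in $\calD_y$ (the latter shifted to begin at $x$) produces a path in $\calD_{x+y}$ whose site and bond local times are the pointwise sum of those of $\gamma_1$ and $\gamma_2$. Assumption (R) gives $\Wf{\lambda}(\gamma_1\cdot\gamma_2)\leq \Wf{\lambda}(\gamma_1)\Wf{\lambda}(\gamma_2)$ and (A) gives the reverse. In the attractive case, the first-passage decomposition of any $\gamma\in\calD_{x+y}$ visiting $x$ as $\gamma_1\in\calH_x$ followed by a shifted $\gamma_2\in\calD_y$ immediately yields $\Dl{\lambda}(x+y)\geq\Hl{\lambda}(x)\Dl{\lambda}(y)$. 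Iterating this inequality along a rational ray and combining with $\Hl{\lambda}\leq\Dl{\lambda}$, a two-step Fekete argument shows that both $-\frac{1}{M}\log\Hl{\lambda}(\lfloor Mx\rfloor)$ and $-\frac{1}{M}\log\Dl{\lambda}(\lfloor Mx\rfloor)$ converge to a common limit $\xl(x)$. In the repulsive case I would run the analogous scheme on bridges confined to a thin slab about $\bbR^+ x$: their supports are disjoint after concatenation so $\Wf{\lambda}$ becomes exactly multiplicative, the bridge generating function is cleanly subadditive, and the mass discarded by the truncation is controlled via \eqref{HD} and vanishes after the $1/M$ normalisation.

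\emph{Norm properties and equivalence.} Positive homogeneity is automatic from scaling $M$, and the triangle inequality follows by applying the concatenation inequality to $\lfloor Mx\rfloor$ and $\lfloor My\rfloor$ before letting $M\to\infty$. Equivalence with $\|\cdot\|_1$ is bilateral: \eqref{HD} together with $Z_n\leq e^{(\lambda_0+\so)n}$ yields $\Dl{\lambda}(x)\leq C_\lambda e^{-(\lambda-\lambda_0)\|x\|_1}$ and hence $\xl(x)\geq(\lambda-\lambda_0)\|x\|_1$, while the straight-line nearest-neighbour path from $0$ to $x$ contributes $\Hl{\lambda}(x)\geq e^{-(\lambda+\phi(1))\|x\|_1-\phi(1)}$ and hence $\xl(x)\leq(\lambda+\phi(1))\|x\|_1$. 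The ratio bound \eqref{xi:maxmin} follows from the invariance of $\xl$ under the hyperoctahedral symmetries of $\Zd$, combined with convexity of its unit ball inherited from the triangle inequality: every $x\in\bbS^{d-1}$ is a convex combination of $O(d)$ isometric images of a fixed coordinate direction.

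\emph{The limit as $\lambda\downarrow\lambda_0$.} The repulsive case $\xln\equiv 0$ reflects divergence of the correlation length at criticality: paths of length $n\gg\|x\|_1$ contribute at leading order $Z_n e^{-\lambda n}$ times a local-CLT-type density, and since $(\log Z_n)/n\to\lambda_0$ the effective mass vanishes as $\lambda\downarrow\lambda_0$. The attractive case hinges on $\phi(1)>0$, which is forced by (A) and (N): subadditivity gives $\phi(n)\leq n\phi(1)$, so $\phi(1)=0$ would make $\phi\equiv 0$. Every $\gamma\in\calD_x$ visits at least $\|x\|_1+1$ distinct sites, and monotonicity of $\phi$ gives $\phi(l_z(\gamma))\geq\phi(1)$ at each visited site $z$, whence $\Phi(\gamma)\geq\phi(1)(\|x\|_1+1)$. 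Bounding $e^{-\Phi}$ by this exponential and summing over all $\gamma:0\to x$ without further restriction gives
\[
\Dl{\lambda}(x)\,\leq\,e^{-\phi(1)(\|x\|_1+1)}\cdot G_{\mathrm{SRW}}\bigl(x\,;\,2d\,e^{-\lambda}\bigr),
\]
where $G_{\mathrm{SRW}}$ is the ordinary lattice Green's function. For every $\lambda>\lambda_0=\log(2d)$ the rightmost factor has at worst subexponential decay rate in $\|x\|_1$, so that $\xl(x)\geq\phi(1)\|x\|_1$ uniformly in $\lambda$; passing to the monotone limit yields $\xln(x)\geq\phi(1)\|x\|_1>0$. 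The main obstacle, in my view, is the repulsive subadditivity step: $-\log\Dl{\lambda}$ is not directly subadditive, and the bridge truncation requires care to ensure that the discarded configurations contribute only sub-exponentially after the $1/M$ normalisation.
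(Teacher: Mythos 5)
Your attractive-case analysis and the norm-equivalence discussion are essentially sound and close to the paper's, but the repulsive case contains two genuine gaps, both located exactly where you yourself flag difficulty.

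First, existence of the limit in the repulsive case. Your slab-confined bridge scheme leaves the decisive step unproved: the claim that ``the mass discarded by the truncation is controlled via \eqref{HD}'' does not follow, because \eqref{HD} only bounds the full generating function from above and says nothing about what fraction of it is carried by slab-confined bridges. That bridges capture the full exponential rate is a Hammersley--Welsh-type statement which, for a general repulsive $\phi$, is itself a nontrivial theorem. The paper avoids this entirely by proving an approximate supermultiplicativity directly: given $\gamma\in\calH_x$ and $\eta\in x+\calH_y$, splitting at the first intersection point of $\gamma$ with $\eta$ produces a path in $\calH_{x+y}$ together with two leftover pieces, and repulsivity yields $\Hl{\lambda}(x)\Hl{\lambda}(y)\leq 2e^{\phi(1)}\bigl(\sum_z\Dl{\lambda}(z)^2\bigr)\Hl{\lambda}(x+y)$, i.e.\ \eqref{RHbound}; since the bubble $\sum_z\Dl{\lambda}(z)^2$ is finite for every $\lambda>\lambda_0$, Fekete's lemma applies to $-\log\Hl{\lambda}$ up to an additive constant. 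This inequality is reused later (it is the source of \eqref{Hlxi} in the repulsive case), so a bridge-based substitute would leave a further hole downstream. A smaller slip of the same nature occurs in your attractive case: your two ingredients, $\Hl{\lambda}\leq\Dl{\lambda}$ and $\Dl{\lambda}(x+y)\geq\Hl{\lambda}(x)\Dl{\lambda}(y)$, both give the same one-sided comparison (the decay rate of $\Dl{\lambda}$ is at most that of $\Hl{\lambda}$); to identify the two limits in \eqref{xidef} one needs the reverse bound $\Dl{\lambda}(x)\leq\Hl{\lambda}(x)\sum_{\eta:0\mapsto 0}e^{-\lambda|\eta|}$, obtained by splitting at the first visit to $x$ and using monotonicity of $\Phi$, whose finiteness rests on \eqref{lambdan}.

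Second, the claim $\xln\equiv 0$ in the repulsive case is asserted, not proved. The phrase ``$Z_n e^{-\lambda n}$ times a local-CLT-type density'' presupposes that, for $\|x\|\ll n$, a subexponential fraction of the $n$-step weight sits at the prescribed endpoint, uniformly as $\lambda\downarrow\lambda_0$ --- which is essentially the statement to be established, and for self-avoiding walk is already a nontrivial fact. The paper argues by contradiction: if $\xln\not\equiv 0$, then \eqref{RHbound} upgrades the definition \eqref{xidef} to a uniform pointwise bound $\Dl{\lambda}(x)\leq c\,e^{-\xl(x)}\leq c\,e^{-\xln(x)}$ valid for all $\lambda>\lambda_0$; Fatou then forces $\sum_x\Dl{\lambda_0}(x)<\infty$, contradicting $\sum_x\Dl{\lambda_0}(x)=\sum_n\Zn e^{-\lambda_0 n}=\infty$, which holds because subadditivity gives $\Zn\geq e^{\lambda_0 n}$ in the repulsive case. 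You should adopt this route; your heuristic cannot be closed without it or a comparable quantitative input.
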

\noindent
For every $\lambda\geq\lambda_0$ define the Wulff shape,
\be 
\label{Wulff}
\Kl\, =\, \lbr h\in\Rd\, :\, (h,x )\leq \xl (x)\ \forall\, x\in\Rd\rbr .
\ee
The name Wulff shape is inherited from continuum mechanics, where $\Kl$
 is the equilibrium crystal shape once $\xl$ is interpreted to be a surface
tension. Alternatively, one can describe $\Kl$ in terms of polar norms, 
as was done, e.g., in \citep{FluryLD}: introducing the polar norm
\[
 \xi^*_\lambda (h)\, =\, \max_{x\neq 0}\frac{(h,x)}{\xl (x)}\, =\, 
\max_{\xl (x) =1} (h,x) .
\]
we see that $\Kl$ can be identified with the corresponding unit ball,
\[
 \Kl\, =\, \lbr h~:~ \xi_\lambda^* (h) \leq 1\rbr.
\]
This way or another, in view of Proposition~\ref{pro:LyapExp}, the limiting shape $\Kln$ has non-empty
interior in the attractive case, whereas $\Kln = \lbr 0\rbr$ in the
repulsive case.

\subsection{Main result}
Let $h\in\Rd\setminus \{0\}$. Assume that the potential 
$\Phi$ in \eqref{Potential} satisfies condition {\bf (N)}. In all the cases
under consideration the displacement per step $D (\gamma )/n$ satisfies
 under $\Pnf{h}$ a large deviation (LD) principle with a convex 
rate function $J_h$.  This, of course, does not say much. However, 
\smallskip 

\noindent
1) For repulsive potentials  {\bf (R)}, it is always the case
that the probability measures 
$\Pnf{h}$ are  asymptotically concentrated on ballistic trajectories:
For every $h\neq 0$ 
there exist $\bar{v} =\bar{v}_h\in\Rd\setminus \{0\}$, a constant $\kappa >0$ and
a small $\epsilon$ ($\epsilon < \|\bar{v}_h\|$), such that
\be 
\label{Ballistic}
\Pnf{h}\lb  \frac{D (\gamma )}{n} \not\in B_\epsilon (
\bar{v})\rb\, 
\leq\, {\rm e}^{-\kappa n} , 
\ee  
where $B_\epsilon (\bar{v}) = \lbr u : |u-\bar{v} | < \epsilon\rbr$. 
Furthermore, the end-point $\gamma (n)$ complies with the following strong
 local  limit type description: 
The rate function $J_h$ is
real analytic and strictly convex 
on  $B_\epsilon (\bar{v}_h)$ with a 
non-degenerate quadratic minimum  at $\bar{v}_h$. Moreover, there
exists  a strictly positive real analytic function $G  $  on $B_\epsilon (\bar{v}_h)$ such that
\be 
\label{BeAsympt}
\Pnf{h}\lb \frac{D(\gamma )}{n} =u\rb\, =\, 
\frac{G (u)}{\sqrt{ ( n )^d}} {\rm e}^{-n J_h (u )}\lb 1+\so\rb ,
\ee 
uniformly in $u\in B_\epsilon (\bar{v}_h)\cap \Zd /n$. In particular,
the displacement $D(\gamma )$ obeys under $\Pnf{h}$ a local CLT and local 
moderate deviations on all possible scales.
\smallskip

\noindent
2) In the (normalised) attractive
case {\bf (A) + (SL)}:
\begin{itemize}
 \item
 If $h\in {\rm int}\lb \Kln \rb$, 
then $D(\gamma )/n$ behaves sub-ballistically: Namely, the rate function $J_h$
 is bounded below in a neighbourhood of the origin as
$J_h (u) \geq \alpha \| u\|$ for some $\alpha =\alpha (h) >0$.
\item
If, on the other hand,
$h\not\in \Kln$, then both \eqref{Ballistic} and the local limit
description \eqref{BeAsympt} hold. 
\end{itemize}
\begin{remark} It remains an open question to determine what happens when $h\in\partial \Kln$.  Nevertheless, we shall show that in this case the rate function satisfies $J_h (0) =0$, and thus no ballistic behaviour in the sense of \eqref{Ballistic} is to be expected. It still remains to understand whether the sub-ballistic to
ballistic transition is of first order. One way to formulate this
question is: Assume that $h\in\partial \Kln$; does
 $\liminf_{\epsilon\to 0}\|\bar{v}_{ (1+\epsilon )h}\| >0$ hold or not?
In \citep{Mehra} it is claimed on the basis of simulations that the transition
is of first order in dimensions $d\geq 2$ whereas it is of second order in $d=1$.
\end{remark}

In the sequel, values of {\em positive} 
constants $\delta ,\epsilon, \nu, c ,C_1, c_2 ,\dots $ 
 may vary from section to section. 

\subsection{Acknowledgements}
This work was partly supported by the Swiss NSF grant \#200020-113256.

\section{Coarse Graining} Let $\lambda >\lambda_0$.  In this section we are 
going to develop a rough description of typical paths which contribute to
$\Hl{\lambda} (x)$. Their contribution is going to be measured in terms of the
probability distribution
\[
 \Plf{x}\lb \gamma \rb\, =\, \frac{\Wf{\lambda} (\gamma )}{\Hl{\lambda} (x)}
\]
on $\calH_x$. Our results will be particularly relevant in the 
asymptotic regime $\|x\|\to\infty$. 

The key idea behind the renormalisation is that although on the
microscopic scale paths $\gamma$ are entitled to wiggle as much as they wish, 
on large but yet finite scales they exhibit a much more rigid behaviour and, 
in a sense, go ballistically towards $x$. 

The skeleton scale $K$ is going to be chosen the same for all $x$.
Given such $K$ and a path $\gamma\in \calH_x$ we shall use $\hat{\gamma}_K$ for
the $K$-skeleton of $\gamma$. The construction of $\hat{\gamma}_K$ will be 
different 
for repulsive and attractive interactions, but in both cases $\hat{\gamma}_K$
will enjoy the following two properties:

\noindent
{\bf (P1)} $\gamma$ passes through {\em all} the vertices of $\hat{\gamma}_K$.

\noindent
{\bf (P2)} If $\eta$ is a portion of $\gamma$ which connects two vertices $u$ and $v$ of
$\hat{\gamma}_K$ and $\eta$ does not pass through any other vertex of $\hat{\gamma}_K$, then
\be 
\label{haircontrol}
\eta\, \subseteq \, K\Ul (u )\cup K\Ul (v) ,
\ee 
where $\Ul$ is the unit ball in the $\xl$ norm.

\noindent
Below we shall rely on the following inequality which holds for all the 
models we consider: Let $\lambda >\lambda_0$ be fixed. There exists 
 a sequence $\epsilon_K\to 0$, such that 
\be 
\label{Hlxi}
\Hl{\lambda} (u)\, \leq\, {\rm e}^{-\xl (u)\lb 1-\epsilon_K \rb} ,
\ee 
uniformly in $K$ and in $u\not\in K\Ul$. 

\noindent
Indeed, in the attractive case \eqref{Hlxi} follows from super-multiplicativity
of $\Hl{\lambda} (x)$ (with $\epsilon_K\equiv 0$).  In the repulsive case
 \eqref{Hlxi} follows from the estimate \eqref{RHbound} of Appendix. 

\subsection{Skeletons in the repulsive case} Let us fix a scale $K$. 
\begin{figure}[t]
\begin{center}
\includegraphics[height=4cm]{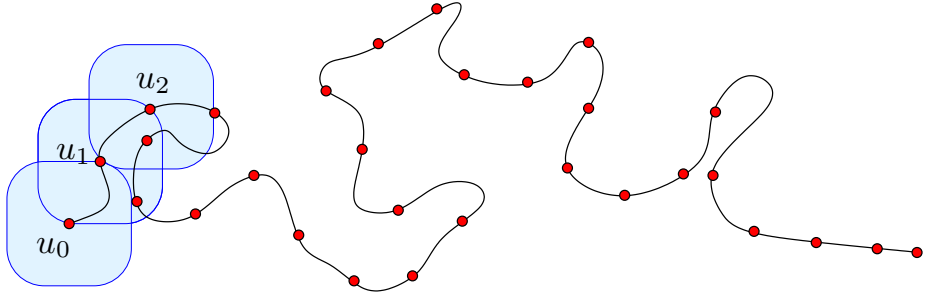}
\end{center}
\caption{The construction of the $K$-skeleton in the repulsive case.}
\label{fig_rep}
\end{figure} 
The $K$-skeleton $\hat{\gamma}_K$ 
of $\gamma = \lb\gamma (0), \dots ,\gamma (n)\rb\in\calH_x$ is constructed as follows (see Fig.~\ref{fig_rep}):
\smallskip

\noindent
\step{0}. Set $u_0=0$, $\tau_0 = 0$ and $\hat{\gamma}_K =\lbr u_0\rbr$. Go 
to \step{1}.
\smallskip

\noindent
\step{(l+1)}. If $\lb \gamma (i_l ),\dots ,\gamma (n)\rb \subseteq K\Ul (u_l )$
then stop. Otherwise set
\[
\tau_{l+1}\, =\, \min\lbr j> \tau_l~:~ \gamma (j)\not\in K\Ul (u_l )\rbr .
\]
Define $u_{l+1} = \gamma (\tau_{l+1} )$, update 
$\hat{\gamma}_K = \hat{\gamma}_K \cup \lbr u_{l+1}\rbr$ and go to 
\step{(l+2)}. \qed
\smallskip 

\noindent
Given  a skeleton $ \hat{\gamma}_K = 
(0, u_1, \dots ,u_m)$ we use $\# \lb \hat{\gamma}_K\rb =m$ for the number of {\em full} $K$-steps.
If $\gamma$ is compatible with $ \hat{\gamma}_K$, then it is decomposable as the concatenation,
\[
 \gamma \, =\, \gamma_1\cup\gamma_2\cup\dots\cup\gamma_m\cup\gamma_{m+1} ,
\]
where $\gamma_l : u_{l-1}\mapsto u_l$ for $l=1,\dots ,m$ and 
$\gamma_{m+1} :u_{m+1}\mapsto x$. 
By construction, $\gamma_l\in\calH_{u_l - u_{l-1}}$.
Since we are in the repulsive case, 
\be 
\label{Rproduct}
 \Wf{\lambda} (\gamma )\, \leq\, \prod_1^{m+1} \Wf{\lambda} (\gamma_l)
\ee
Therefore, the probability of appearance of $\hat{\gamma}_K$ is bounded
above as (see \eqref{Hlxi} for the second inequality) , 
\be 
\label{PgammaK}
\Plf{x}\lb \hat{\gamma}_K\rb\,  \leq\, 
\frac{\prod_1^m\Hl{\lambda} (u_l - u_{l-1})}{\Hl{\lambda}(x)} \, 
\leq\, \frac{{\rm e}^{-m K(1-\epsilon_K )}}{\Hl{\lambda}(x)}.
\ee
This formula is a key to a study of the geometry of typical skeletons and 
hence of typical paths. Its implications for path decomposition 
are discussed in detail in Section~\ref{Decomposition}. Notice, however, 
that one immediate consequence is the following uniform exponential bound
on the number of steps in typical skeletons: There  
exist $\nu =\nu (\lambda ,d)$
 and $C  = C (\lambda ,d )$ and a large finite  
scale  $K_0 = K_0 (\lambda ,d)$, 
such that, 
\be 
\label{StepControl}
\Plf{x}\lb \# (\hat{\gamma}_K )\, >\,  C\frac{\|x\|}{K} \rb\, \leq\, 
{\rm e}^{-\nu \|x\|} ,
\ee
uniformly in  $K\geq K_0$ and $x\in\Zd$. Indeed, the total number 
of $m$-step $K$-skeletons is bounded above as 
${\rm exp}\lbr c_1 (d,\lambda )\log K\rbr$, which, on large $K$ scales, 
is suppressed by 
 the extra  ${\rm e}^{-K (1-\epsilon_K )}$ per step price  
 in \eqref{PgammaK}.
\subsection{Skeletons in the attractive case}
Since in the attractive
case \eqref{Rproduct} does not hold we should proceed with more care. 
Accordingly our construction relies on the following fact: Assume that
the path $\gamma$ can be represented as a concatenation,
\be 
\label{gleta}
 \gamma\, =\, \gamma_1\cup\eta_1\cup\gamma_2\cup\dots\cup \gamma_m\cup\eta_m ,
\ee
such that $\gamma_1 ,\dots ,\gamma_m$ share at most one end-point. Then,
\be 
\label{Wfbound}
\Wf{\lambda}\lb\gamma \rb\, \leq\, 
\prod_{l=1}^m \Wf{\lambda} (\gamma_l ){\rm e}^{\phi (1)}
\cdot\prod_{k=1}^m {\rm e}^{-\lambda |\eta_k|} .
\ee

For every $\lambda >\lambda_0$, \eqref{lambdan} enables a comparison with a killed simple random walk.
Consequently, since $\xl$ is an equivalent norm, 
there exists $\delta = \delta (\lambda ,d)>0$, such that
\be 
\label{RWterm}
\sum_{\eta\in\calH_u}{\rm e}^{-\lambda |\eta |}\, \leq\, {\rm e}^{-\delta K} ,
\ee
uniformly in $K$ and in $u\not\in K\Ul$. 

\begin{figure}[t]
\begin{center}
\includegraphics[height=5cm]{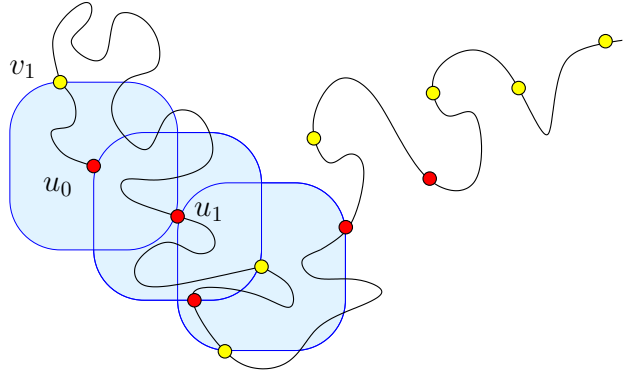}
\end{center}
\caption{The first stage of the construction of the $K$-skeleton in the attractive case (notice that $u_5=v_5$ and $u_6=v_6$ in this picture). In a second stage, one constructs skeletons for the (reversed) paths connecting $u_k$ to $v_k$, using the algorithm for the repulsive case.}
\label{fig_attr}
\end{figure} 
In view of \eqref{Wfbound} and \eqref{RWterm} it happens to be natural to 
construct skeletons $\hat{\gamma}_K$ as a union 
$\hat{\gamma}_K =\frt_K\cup\frh_K$, where $\frt_K$ is the trunk and 
$\frh_K$ is the set of hairs of $\hat{\gamma}_K $ (see Fig.~\ref{fig_attr}). Let $\gamma\in\calH_x$ and choose a scale $K$.
\smallskip

\noindent
\step{0}. Set  $u_0 =0$, $\tau_0 = 0$ and $\frt_0 =\lbr u_0\rbr$.  Go to
\step{1}. 
\smallskip

\noindent
\step{(l+1)}
If $\lb\gamma (\tau_l) ,\dots \gamma (n)\rb\subseteq K\Ul (u_l )$ then 
set $\sigma_{l+1} = n$ and stop.
Otherwise, 
 define 
\begin{equation*}
 \begin{split}
&\sigma_{l+1} = \min\lbr i >\tau_l~:~ 
\gamma (i)\not\in K\Ul (u_l )\rbr\\
&\qquad\text{ and}\\
&\tau_{l+1} = 1+ \max 
\lbr i >\tau_l~:~ 
\gamma (i) \in K\Ul (u_l )\rbr .
\end{split}
\end{equation*}
 Set $v_{l+1} = \gamma (\sigma_{l+1})$ and $u_{l+1} = \gamma (\tau_{l+1})$. 
Update $\frt_K = \frt_K\cup\lbr u_{l+1}\rbr$ and go to \step{(l+2)} \qed

\noindent
Apart from producing $\frt_K$ the above algorithm leads to a decomposition
of $\gamma$ as in \eqref{gleta} with 
\[
\gamma_l\, =\, \lb \gamma(\tau_l ),\dots ,\gamma (\sigma_{l+1} )\rb\quad
\text{and}\quad \eta_l = \lb \gamma (\sigma_l ),\dots ,\gamma (\tau_l )\rb .
\]
The hairs $\frh_K$ of $\hat{\gamma}_K$ take into account those $\eta_l$-s
which are long on $K$-th scale. Recall that $\eta_l : v_l\mapsto u_{l}$.
It is equivalent, but more convenient to think about $\eta_l$ as of 
a reversed path from $u_l$ to $v_l$. 
Then the $l$-th hair $\frh_K^l$ of $\gamma$ is constructed as follows: 
If $\eta_l\subseteq K\Ul (u_l )$ then $\frh_K^l =\emptyset$. Otherwise, 
construct $\frh_K^l$  of $\eta_l$ 
following exactly the same rules as in 
the construction of  $K$-skeletons in the repulsive case.

Putting everything together, using \eqref{Wfbound} and \eqref{RWterm}, we arrive to the following upper bound on the
probability of a $K$-skeleton $\hat{\gamma}_K = \frt_K\cup\frh_K$, 
\be 
\label{PthK}
\Plf{x}\lb \frt_K\cup\frh_K\rb\, \leq\, 
\frac{{\rm e}^{-
\# \lb \frt_K \rb K(1-\phi (1)/K)
 -\delta K \#\lb\frh_K\rb  }}{\Hl{\lambda}(x)},
\ee
where $ \#\lb\frh_K \rb\df \sum_1^m \#\lb\frh_K^l\rb$ is the total $K$-length of hairs attached to the trunk $\frt_K$.

\noindent
As in the repulsive case, \eqref{PthK} leads to an exponential upper bound on the number of $K$-steps in the trunk $\frt_K$: There exists a large finite scale  $K_0 = K_0 (\lambda ,d)$ such that
\be
\label{Atrunk}
\Plf{x}\lb \frt_K\rb\, \leq\, 
\frac{{\rm e}^{-\# (\frt_K )K \lb 1-
\epsilon_K \rb}}
{\Hl{\lambda}(x)} ,
\ee
for all $K\geq K_0$ and uniformly in $x$ and $\frt_K$. 
Consequently, there exist $\nu =\nu (\lambda ,d)$ and $C = C(\lambda ,d )$ such that,
\be 
\label{StepControlA}
\Plf{x}\lb \# (\frt_K )\, >\, C\frac{\|x\|}{K} \rb\, \leq\, 
{\rm e}^{-\nu \|x\|} ,
\ee
uniformly in $K\geq K_0$ and $x\in\Zd$. Furthermore, since the number of different ways to attach hairs $\frh_K$ with
$
 \# \lb \frh_K\rb = r
$
to vertices of a trunk $\frt_K$ of cardinality $\# \lb \frt_K\rb =m$ is bounded above as
\[
 {\rm e}^{ c_1 r\log K}\cdot\frac{1}{\max_p p^r (1-p )^m}\, \leq\, 
{\rm e}^{c_2 r\log K + m/K} ,
\]
formula \eqref{PthK} implies that
\[
 \Plf{x}\lb \# \lb \frh_K\rb =r ;
\#\lb \frt_K\rb \leq C\frac{\|x\|}{K}\rb\, \leq\, {\rm e}^{-\delta r K + 
c_3\|x\| \log K/K}
\max_{K\#(\frt_K )\leq C\|x\| }\frac{{\rm e}^{-K\# (\frt_K )}}{\Hl{\lambda}(x)} .
\]
As we shall point out in the beginning of Section~\ref{Decomposition}, 
\be 
\label{Correction}
\limsup_{K\to\infty}\limsup_{\|x\|\to\infty} \frac{1}{\|x\|}\log
\max_{K\# (\frt_K)\leq C\|x\|}
\frac{{\rm e}^{-K\# (\frt_K )}}{\Hl{\lambda}(x)} = 0.
\ee 
Consequently, for each $\epsilon > 0$ there exists a finite scale
$K_0 = K_0 (\epsilon ,\lambda ,d)$, such that
\be 
\label{HairControlA}
\Plf{x}\lb \# (\frh_K )\, >\, \epsilon\frac{\|x\|}{K} \rb\, \leq\, 
{\rm e}^{-\epsilon\delta \|x\|/2} ,
\ee
uniformly in $K\geq K_0$ and $x\in\Zd$.

\section{Irreducible decomposition of ballistic paths}
\label{Decomposition}
In this section we derive an irreducible representation of typical (under $\Plf{x}$)  paths $\gamma\in\calH_x$. This irreducible representation has an effective 1D structure and it enables a local limit treatment of various observables over paths such as, e.g., the displacement $D (\gamma )$ along $\gamma$, or the number of steps $|\gamma|$ in $\gamma$.  It should be kept in mind that in the framework of the theory we develop there are {\em many alternative}
 ways to define irreducible paths and, 
accordingly, to study statistics of other local patterns over $\gamma$.

In the sequel, given $x\in\Zd$, we say that $h\in\partial\Kl$ is dual to $x$ if $(h,x)=\xl(x)$.  Of course, if $h$ is dual to $x$, then $h$ is dual to $\alpha x$ for any $\alpha >0$. Thus it makes sense to talk about $\|x\|\to\infty$ for a fixed dual $h$.

\subsection{Surcharge function and surcharge inequality}
We shall now analyse the geometry of typical $K$-skeletons. In order to unify as much as possible our treatment of the repulsive and attractive cases, let us define the trunk $\frt_K$ of the skeleton $\hat\gamma_K$ in the repulsive case as being the skeleton itself, $\frt_K=\hat\gamma_K$.

The basic quantity in the following analysis is the \emph{surcharge function} of a trunk $\frt_K=(u_0,\ldots,u_m)$ defined by
\[
\sur(\frt_K) = \sum_{i=1}^m \sur(u_i-u_{i-1}),
\]
where the function $\sur:\bbR^d\to\bbR_+$ is given by $\sur(y) = \xl(y) - (h,y)$.
Note that $\sur (y) = 0$ if and only if $h\in\partial \Kl$ and $y$ are dual directions.
The surcharge measure $\sur$ enters our skeleton calculus in the 
following fashion: By construction, 
$\xl (u_i -u_{i-1}) \leq K +c_1 (\lambda)$
 for all $K$-steps of $\frt_K$.  As a result, 
\[
  K \# (\frt_K )\, \geq \, \sum \lb \xl (u_i -u_{i-1}) -c_1 (\lambda )\rb \, 
\geq \, \xl (x) + \sur (\frt_K ) - c_1\# (\frt_K ).
\]
Since we can restrict attention to $\# (\frt_K )\leq C\|x\|/K$, \eqref{Correction} is an immediate corollary.
 Moreover, arguing 
similarly as in~\eqref{PgammaK} and~\eqref{Atrunk}, we obtain that, for any $\epsilon >0$ fixed,
\[
\Plf{x}(\frt_K) \leq e^{-\sur(\frt_K)(1-o_K(1))},
\]
uniformly in $\|x\|$ and in  $\sur(\frt_K) \geq \epsilon \|x\|$,
with $\lim_{K\to\infty} o_K(1)=0$. The following \emph{surcharge inequality} is at the core of our method, allowing to reduce the characterisation of typical trunks to geometrical considerations.
\begin{lemma}
\label{SurchargeInequality}
For every small $\epsilon>0$ there exists $K_0(d,\lambda,\epsilon)$ such that
\[
\Plf{x} ( \sur(\frt_K) > 2\epsilon \|x\| ) \leq e^{-\epsilon \|x\|},
\]
uniformly in $x\in\bbZ^d$, $h\in\partial\Kl$ dual to $x$, and scales $K>K_0$.
\end{lemma}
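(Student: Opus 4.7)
The plan is to reduce the event $\{\sur(\frt_K)>2\epsilon\|x\|\}$ to a union bound over trunks, and then invoke the per-trunk estimate $\Plf{x}(\frt_K)\leq e^{-\sur(\frt_K)(1-o_K(1))}$ already established in the excerpt (valid whenever $\sur(\frt_K)\geq\epsilon\|x\|$). The balance between entropy and surcharge is recovered by taking the coarse-graining scale $K$ large: each trunk step contributes a decay of order $e^{-K}$, while the number of choices per step is only polynomial in $K$, so the ratio $\log K/K$ can be made arbitrarily small before any limit in $\|x\|$ is taken.

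First I would split the event according to the number of trunk steps, writing
\[
\{\sur(\frt_K)>2\epsilon\|x\|\}\subseteq\{\#(\frt_K)>C\|x\|/K\}\cup\bigl\{\sur(\frt_K)>2\epsilon\|x\|,\,\#(\frt_K)\leq C\|x\|/K\bigr\}.
\]
The first event has $\Plf{x}$-probability at most $e^{-\nu\|x\|}$ by \eqref{StepControl} in the repulsive case and \eqref{StepControlA} in the attractive case, with $\nu=\nu(\lambda,d)$; since we are free to shrink $\epsilon$ at the outset, we may assume $\nu\geq 2\epsilon$. For the second event I would apply a union bound over trunks. By the stopping-rule construction each increment $u_i-u_{i-1}$ satisfies $\xl(u_i-u_{i-1})\leq K+O(1)$, hence $\|u_i-u_{i-1}\|\leq c_2 K$ by the norm equivalence in Proposition~\ref{pro:LyapExp}. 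Consequently the number of $m$-step trunks rooted at $0$ is at most $(c_3 K)^{dm}$, and combining this with the per-trunk bound (available since $\sur(\frt_K)>2\epsilon\|x\|\geq\epsilon\|x\|$) gives
\[
\Plf{x}\bigl(\sur(\frt_K)>2\epsilon\|x\|,\,\#(\frt_K)\leq C\|x\|/K\bigr)\leq\frac{C\|x\|}{K}\,(c_3 K)^{dC\|x\|/K}\,e^{-2\epsilon\|x\|(1-o_K(1))}.
\]

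Finally I would balance the two exponents by choosing $K$ large. Since $\log K/K\to 0$ and $o_K(1)\to 0$, there exists $K_0(d,\lambda,\epsilon)$ such that, for all $K\geq K_0$, the combined exponent in the displayed bound is at most $-\frac{3}{2}\epsilon\|x\|$; the linear prefactor $C\|x\|/K$ is then absorbed for all $\|x\|$ sufficiently large, while the remaining small-$\|x\|$ regime is vacuous since $x\in K\Ul$ forces the trunk to reduce to $\{0\}$, whence $\sur(\frt_K)=0$. Combined with the $e^{-\nu\|x\|}$ contribution from the first event, this yields the claim. The main subtlety lies precisely in this entropy-versus-energy balance: the method crucially uses that the decay per trunk step is genuinely linear in $K$, which stems from the matching lower bound $\Hl{\lambda}(x)\geq e^{-\xl(x)(1+o(1))}$ together with the geometric identity $K\#(\frt_K)\geq\xl(x)+\sur(\frt_K)-c_1\#(\frt_K)$ that underpins the per-trunk estimate.
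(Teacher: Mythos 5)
Your proposal is correct and follows essentially the same route as the paper: restrict to admissible trunks via \eqref{StepControl}/\eqref{StepControlA}, bound the entropy of admissible trunks by $\exp\lb c\,\|x\|\log K/K\rb$, apply the per-trunk estimate $\Plf{x}(\frt_K)\leq e^{-\sur(\frt_K)(1-o_K(1))}$, and beat the entropy by taking $K$ large. The only differences are presentational — you spell out the trunk-counting and the small-$\|x\|$ regime that the paper leaves implicit.
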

\begin{proof}
By~\eqref{StepControl} and~\eqref{StepControlA}, we can assume that the trunk is admissible, that is $\# (\frt_K )\leq C\|x\|/K$. 
Since the number of such trunks is bounded by
\[
\exp \left( c_2(d,\lambda) \frac{\log K}K \|x\| \right),
\]
we infer that
\[
\Plf{x} ( \sur(\frt_K) > 2\epsilon \|x\| ) \leq e^{-\nu \|x\|} + \exp  \left( c_2(d,\lambda) \frac{\log K}K \|x\| - \sur(\frt_K)(1-o_K(1)) \right) \leq e^{-\epsilon \|x\|},
\]
as soon as $K$ is chosen large enough.
\end{proof}
Thanks to the surcharge inequality, we can exclude whole families of trunks simply by establishing a lower bound as above on their surcharge function.

\subsection{Cone points of trunks}
Let us fix $\delta\in(0,\tfrac13)$. For any $h\in\partial\Kl$, we define the  \emph{forward cone} by
\[
\fcone = \{ y\in\bbZ^d \,:\, \sur(x) < \delta\xl(x) \},
\]
and the \emph{backward cone} by $\bcone = -\fcone$.

Given a trunk $\frt_K=(u_0,\ldots,u_m)$, we say that the point $u_k$ is an \emph{$(h,\delta)$-forward cone point} if
\[
\{u_{k+1},\ldots,u_m\} \subset u_k + \fcone.
\]
Similarly, $u_k$ is  an \emph{$(h,\delta)$-backward cone point} if
\[
\{u_0,\ldots,u_{k-1}\} \subset u_k + \bcone.
\]
Finally, $u_k$ is  an \emph{$(h,\delta)$-cone point} if it is both an $(h,\delta)$-forward and an $(h,\delta)$-backward cone point.

Proceeding now as in Section~2.6 of~\citep{CaIoVe3}, we prove that most vertices of typical trunks are $(h,\delta)$-cone points. Let us denote by
$\#_{h,\delta}^{\scriptscriptstyle\rm non-cone}(\frt_K)$ the number of those vertices  in the trunk $\frt_K$ that are not $(h,\delta)$-cone points.
\begin{lemma}
Let $\delta\in(0,\tfrac13)$ be fixed. Then
\[
\sur(\frt_K) \geq c_3
\delta K\#_{h,\delta}^{\scriptscriptstyle\rm non-cone}(\frt_K),
\]
uniformly in $x\in\bbZ^d$, $h\in\partial\Kl$ dual to $x$, and $K$ large enough. In particular, the estimate
\[
\Plf{x} \lb\#_{h,\delta}^{\scriptscriptstyle\rm non-cone}(\frt_K) \geq \epsilon\,
\# (\frt_K )\rb
 \leq e^{-c_4 \epsilon \|x\|},
\]
holds uniformly in $x\in\bbZ^d$, $h\in\partial\Kl$ dual to $x$, and $K$ large enough.
\end{lemma}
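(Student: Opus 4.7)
The plan is to establish first a deterministic geometric inequality
\[
 \sur(\frt_K)\,\geq\,c_3\delta K\,\#_{h,\delta}^{\scriptscriptstyle\rm non-cone}(\frt_K),
\]
and then to derive the probabilistic estimate from it via Lemma~\ref{SurchargeInequality}. For the latter step, since by the skeleton construction each trunk increment obeys $\xl(u_i-u_{i-1})\leq K+c_1$, one has the elementary deterministic lower bound $\#(\frt_K)\geq \xl(x)/(K+c_1)-1\gtrsim\|x\|/K$ (using equivalence of $\xl$ with the Euclidean norm). Thus on the event $\{\#_{h,\delta}^{\scriptscriptstyle\rm non-cone}(\frt_K)\geq\epsilon\#(\frt_K)\}$ the geometric inequality yields $\sur(\frt_K)\geq c_5\delta\epsilon\|x\|$, and Lemma~\ref{SurchargeInequality} applied with $\epsilon'=c_5\delta\epsilon/2$ (for $\epsilon$ small enough) delivers the required bound $e^{-c_4\epsilon\|x\|}$ with $c_4=c_5\delta/2$.

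For the geometric inequality itself, the pivotal observation is the stability of $\fcone$ under addition: for any $y_1,y_2\in\fcone$, subadditivity of $\sur$ and the triangle inequality for $\xl$ give
\[
 (h,y_1+y_2)\,>\,(1-\delta)\bigl[\xl(y_1)+\xl(y_2)\bigr]\,\geq\,(1-\delta)\xl(y_1+y_2),
\]
so $y_1+y_2\in\fcone$, and analogously for $\bcone$. Consequently, if $u_k$ is non-forward-cone and $j(k)>k$ is the minimal violating index, minimality forces $u_{j(k)-1}-u_k\in\fcone$, and stability then forces the increment $u_{j(k)}-u_{j(k)-1}$ to lie outside $\fcone$ as well --- otherwise closure would put $u_{j(k)}-u_k$ back in $\fcone$, contradicting the choice of $j(k)$. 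Since $\xl(u_{j(k)}-u_{j(k)-1})\geq K$ by the skeleton construction, each such ``bad step'' contributes at least $\delta K$ to $\sur(\frt_K)$. To finish, I would control the multiplicity of the map $k\mapsto j(k)$: for a given $i$, the $k$'s with $j(k)=i$ all satisfy $u_{i-1}-u_k\in\fcone$ and $u_i-u_k\notin\fcone$ simultaneously, which by a geometric argument forces $u_{i-1}-u_k$ to live in a $\xl$-bounded region (of $K,\delta$-dependent size) near the boundary of $\fcone$; combined with the $\geq K$ $\xl$-separation of trunk vertices, this yields the desired multiplicity bound, hence $\sur(\frt_K)\geq c_3\delta K\#_{h,\delta}^{\scriptscriptstyle\rm non-fw-cone}(\frt_K)$. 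Non-backward-cone vertices are treated symmetrically by path reversal, completing the argument with some $c_3=c_3(\delta)>0$.

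I expect the main obstacle to be precisely this multiplicity control, which is a purely geometric statement about the shape of $\fcone$ near its boundary (without it, a naive charging scheme may associate many non-cone vertices to the same bad step and would not deliver the desired bound uniformly in $\|x\|$). This is the content of the corresponding argument in Section~2.6 of~\citep{CaIoVe3}, which the proof adapts verbatim to the present setting.
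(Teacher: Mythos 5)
Your reduction of the probabilistic claim to the deterministic inequality is correct and is the intended route: together with $\#(\frt_K)\geq \xl(x)/(K+c_1)-1\asymp\|x\|/K$, the geometric bound converts the event $\lbr\#_{h,\delta}^{\scriptscriptstyle\rm non-cone}(\frt_K)\geq\epsilon\#(\frt_K)\rbr$ into $\lbr\sur(\frt_K)\geq c\,\delta\epsilon\|x\|\rbr$, and Lemma~\ref{SurchargeInequality} finishes. The cone-additivity observation is also correct. The gap is exactly where you suspect it, and your sketch for closing it fails on two counts. First, both geometric facts you invoke are false: trunk vertices are only \emph{consecutively} $K$-separated in $\xl$ (the trunk may loop back, so arbitrarily many $u_k$ can lie in a ball of radius $CK$), and the constraints $u_{i-1}-u_k\in\fcone$, $u_i-u_k\notin\fcone$, $\xl(u_i-u_{i-1})\leq K+c_1$ do \emph{not} confine $u_{i-1}-u_k$ to a bounded region: they do so only when $u_{i-1}-u_k$ lies well inside the cone, whereas a point within angular distance $O(K/\xl(u_{i-1}-u_k))$ of the boundary of $\fcone$ can be arbitrarily far away. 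Second, and more fundamentally, no multiplicity bound can rescue the charging scheme, because the surcharge that must dominate $\#_{h,\delta}^{\scriptscriptstyle\rm non-cone}$ need not sit on increments lying outside $\fcone$ at all. Take $h=\sfe_1$, $\xl$ Euclidean, and a trunk of $m$ identical steps $Kv$ with $v$ at angle $\theta_\delta-\eta$ from $\sfe_1$ (just inside $\fcone$, $\eta$ small), followed by one transversal step. Only the last increment is outside $\fcone$, yet about $1/\eta$ vertices fail to be forward cone points (all with $j(k)=m$); your scheme then yields only $\sur\geq\delta K$, while the lemma demands $\sur\gtrsim\delta K/\eta$. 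The inequality holds only because each of the $m$ near-boundary straight steps silently carries surcharge $\approx\delta K$ --- a contribution your charging ignores.

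The argument of Section~2.6 of \citep{CaIoVe3} (which the paper invokes in lieu of a proof) is built to capture precisely this cumulative contribution. Writing $y_l=u_l-u_{l-1}$ and $s_l=\sur(y_l)\geq 0$, if $u_j-u_k\notin\fcone$ then, combining $\sur(u_j-u_k)\leq\sum_{l=k+1}^j s_l$ (subadditivity) with $\xl(u_j-u_k)\geq(h,u_j-u_k)=\sum_{l=k+1}^j\lb\xl(y_l)-s_l\rb\geq(j-k)K-\sum_{l=k+1}^j s_l$, one gets $\sum_{l=k+1}^j s_l\geq\tfrac{\delta}{1+\delta}(j-k)K$. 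Hence every non-forward-cone point $k$ admits some $j>k$ over which the \emph{average} surcharge per step exceeds $\delta K/2$, and the number of such $k$ is at most $2\sur(\frt_K)/(\delta K)$ by the one-sided maximal inequality (rising sun lemma) applied to the partial sums of the $s_l$; backward cone points are handled by reversing the trunk. Replace the charging-to-a-bad-increment step by this maximal-function argument and the rest of your proof goes through.
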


\subsection{Cone points of skeletons}
In the repulsive case, the previous lemma provides all the control we need. In the attractive case, to which we restrict ourselves temporarily, it is also necessary to control the hairs.
Let us start by extending the notion of $(h,\delta)$-cone points from trunks to full skeletons: A point $u_k$ of a trunk $\frt_K=(u_0,\ldots,u_m)$ is an $(h,\delta)$-forward cone point of the skeleton $\hat\gamma_K=(\frt_K\cup\frh_K)$ if
\[
\hat\gamma_K \subset \left( u_k + \dfcone \right) \cup \left( u_k + \dbcone \right).
\]
(Notice that we increased the aperture of the cone from $\delta$ to $2\delta$.) It readily follows~\eqref{HairControlA} that most vertices of the trunk have no hair attached to them. Therefore, the only way an $(h,\delta)$-cone point of $\frt_K$ may fail to be an $(h,\delta)$-cone point of $\hat\gamma_K$ is when another vertex of the trunk has such a long hair attached to it that the latter exits the (enlarged) cone; in such a case, we say that the $(h,\delta)$-cone point is \emph{blocked}. Lemma~2.5 of~\citep{CaIoVe3} implies that most vertices of the trunk are $(h,\delta)$-cone points of $\hat\gamma_K$.
\begin{lemma}
Let $\#_{h,\delta}^{\scriptscriptstyle\rm blocked}(\hat\gamma_K)$ be the number of vertices of the trunk of $\hat\gamma_K$ that are not $(h,\delta)$-cone points of $\hat\gamma_K$.
Suppose that $\frt_K$ is an admissible trunk with $\#_{h,\delta}^{\scriptscriptstyle\rm non-cone}(\frt_K) < \epsilon\, \#(\frt_K)$. Then there exists $c_5(d,\delta,\lambda)>0$ such that
\[
\Plf{x} (\#_{h,\delta}^{\scriptscriptstyle\rm blocked}(\hat\gamma_K) \geq \epsilon\, \#(\frt_K) \,|\, \frt_K ) \leq e^{-c_5\epsilon \|x\|},
\]
uniformly in $x\in\bbZ^d$, $h\in\partial\Kl$ dual to $x$, and $K$ large enough.
\end{lemma}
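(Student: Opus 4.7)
The plan is to decompose the blocked cone points into (i) those that are already non-cone points of $\frt_K$, whose count is controlled by hypothesis, and (ii) ``hair-induced'' blocks, i.e.\ $(h,\delta)$-cone points of $\frt_K$ that fail to be $(h,\delta)$-cone points of $\hat\gamma_K$ because some hair vertex protrudes outside the enlarged cone. The second contribution is then controlled via a conditional version of~\eqref{HairControlA}, through the geometric observation that each long hair can only block a limited number of cone points.

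For the geometric step, suppose $u_k$ is an $(h,\delta)$-cone point of $\frt_K$ blocked by some $y\in\frh_K^l$ with $l\neq k$. Then $u_l-u_k\in\pm\fcone$ while $y-u_k\notin\pm\dfcone$. Writing $y-u_k=(u_l-u_k)+(y-u_l)$ and using $\sur(z)=\xl(z)-(h,z)$ together with the triangle inequality for $\xl$, a short manipulation yields
\[
\sur(y-u_l)\ \geq\ \delta\,\xl(u_l-u_k),\qquad\text{hence}\qquad \xl(y-u_l)\ \geq\ \tfrac{\delta}{2}\,\xl(u_l-u_k).
\]
Since $\frh_K^l$ has $\xl$-diameter at most $c_1K\#(\frh_K^l)$, the indices it can possibly block satisfy $\xl(u_l-u_k)\leq 2c_1K\#(\frh_K^l)/\delta$. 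The $(h,\delta)$-cone points of $\frt_K$ are $h$-ordered, and admissibility together with a surcharge argument in the spirit of Lemma~2.5 of~\citep{CaIoVe3} guarantees that consecutive ones are separated by at least order $K$ in the $(h,\cdot)$-projection. Hence at most $c_2\#(\frh_K^l)/\delta$ cone points are blocked by $\frh_K^l$, and summing yields hair-induced blocking $\leq c_3\#(\frh_K)/\delta$.

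It remains to apply a conditional analogue of~\eqref{HairControlA}. Using admissibility and the lower bound $\#(\frt_K)\geq c_4\|x\|/K$ (forced by $\xl(x)\leq(K+O(1))\#(\frt_K)$), the event $\#_{h,\delta}^{\scriptscriptstyle\rm blocked}(\hat\gamma_K)\geq\epsilon\,\#(\frt_K)$ combined with the hypothesis on $\#_{h,\delta}^{\scriptscriptstyle\rm non-cone}(\frt_K)$ forces $\#(\frh_K)\geq c_5\delta\epsilon\|x\|/K$ (after a harmless adjustment of constants). Rerunning the counting that produced~\eqref{HairControlA} with $\frt_K$ held fixed, the number of hair configurations attached to $\frt_K$ with total $K$-length $r$ is at most $\exp(c_6 r\log K + m/K)$, the joint weight is bounded by~\eqref{PthK}, and the lower bound $\Hl{\lambda}(x)\geq\exp(-K\#(\frt_K)-o(\|x\|))$ supplied by~\eqref{Correction} yields
\[
\Plf{x}\bigl(\#(\frh_K)\geq r \,\big|\, \frt_K\bigr)\ \leq\ \exp\bigl(-\delta Kr + c_6 r\log K + o(\|x\|)\bigr).
\]
Choosing $K$ large enough that $\delta K$ dominates $c_6\log K$, and inserting $r\geq c_5\delta\epsilon\|x\|/K$, produces the claimed bound $e^{-c_7\epsilon\|x\|}$.

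The hard part will be the geometric step: in the attractive regime the trunk vertices themselves may cluster, so establishing that distinct $(h,\delta)$-cone points are order-$K$ apart in the $h$-direction is non-trivial and requires the surcharge-based argument imported from~\citep{CaIoVe3}. The conditioning on $\frt_K$, by contrast, is handled transparently because~\eqref{PthK} and~\eqref{Correction} together give a two-sided control on $\Plf{x}(\frt_K\cup\frh_K)$ tight enough to push multiplicative factors through without losing exponential precision.
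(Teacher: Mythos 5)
Your overall strategy -- split the failures into trunk non-cone points (controlled by hypothesis) plus hair-induced blocks, show geometrically that a hair of $K$-length $r_l$ can block only $O(r_l/\delta)$ cone points, and then invoke the tail bound on $\#(\frh_K)$ -- is exactly the intended argument (the paper itself only cites Lemma~2.5 of \citep{CaIoVe3} here). Your geometric step is correct, and in fact easier than you fear: no surcharge argument is needed to separate cone points. If $u_j,u_k$ are both $(h,\delta)$-cone points of $\frt_K$ with $j<k$, then $u_{j+1}\in u_j+\fcone$ with $\xl(u_{j+1}-u_j)\geq K$, so $(h,u_{j+1}-u_j)\geq(1-\delta)K$, while $(h,u_k-u_{j+1})\geq 0$ because $u_{j+1}$ lies in the backward cone of $u_k$; hence $(h,u_k-u_j)\geq(1-\delta)K$ and the cone points are $h$-ordered with spacing of order $K$, even in the attractive case where the trunk itself may cluster. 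Combined with your (correct) estimate $\xl(y-u_l)\geq c\,\delta\,\xl(u_l-u_k)$ for a blocking vertex $y\in\frh_K^l$, and $\xl(y-u_l)\leq(K+c)\#(\frh_K^l)$, this gives the claimed $O(\#(\frh_K^l)/\delta)$ count.

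The genuine soft spot is the last step, where you assert that the conditioning on $\frt_K$ ``is handled transparently because \eqref{PthK} and \eqref{Correction} together give a two-sided control on $\Plf{x}\lb\frt_K\cup\frh_K\rb$''. They do not: \eqref{PthK} is only an upper bound, and \eqref{Correction} gives a lower bound on $\Hl{\lambda}(x)$, not on $\Plf{x}(\frt_K)$. Your displayed computation bounds the \emph{joint} quantity $\Plf{x}\lb\#(\frh_K)\geq r,\,\frt_K\rb$; to get the conditional probability you must divide by $\Plf{x}(\frt_K)$, which a priori could be as small as $e^{-c\|x\|}$ for the particular trunk you are conditioning on, destroying the exponential gain. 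To repair this you should either (a) prove the unconditional statement obtained by summing over admissible trunks with few non-cone points -- which is what is actually used downstream and follows directly from \eqref{HairControlA} plus your geometric counting -- or (b) do the hair summation genuinely conditionally: fix the pieces $\gamma_l$ of the decomposition \eqref{gleta}, bound the numerator via \eqref{Wfbound} and \eqref{RWterm}, and lower-bound the denominator by the contribution of hairless configurations using sub-additivity of $\Phi$ in the attractive case, so that the products $\prod_l\Wf{\lambda}(\gamma_l)$ cancel. Finally, note a bookkeeping issue inherited from the statement itself: with $\#_{h,\delta}^{\scriptscriptstyle\rm non-cone}(\frt_K)<\epsilon\,\#(\frt_K)$ and the event $\#_{h,\delta}^{\scriptscriptstyle\rm blocked}(\hat\gamma_K)\geq\epsilon\,\#(\frt_K)$, the number of hair-blocked points forced by your decomposition could be as small as $1$, not $\tfrac{\epsilon}{2}\#(\frt_K)$; your ``harmless adjustment of constants'' really requires reading the hypothesis with $\epsilon/2$ (or counting only hair-induced failures in $\#^{\scriptscriptstyle\rm blocked}$), which is clearly the intent but should be said explicitly.
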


\subsection{Cone points of paths}
We return now to the general case of attractive or repulsive potentials; in the latter case we identify the notions of cone points of trunks and skeletons, as these two notions coincide.

Now that the skeletons are under control, we can turn to the microscopic path itself. Let $\gamma=(\gamma(0),\ldots,\gamma(n))\in\calH_x$. We say that $\gamma(k)$ is an {$(h,\delta)$-cone point} of $\gamma$ if
\[
\gamma \subset \left( \gamma(k) + \tfcone\right) 
\cup \left( \gamma(k) + \tbcone \right).
\]
Notice that we increased again the aperture of the cone to $3\delta$. Notice 
also that we tacitly assume that  $\tfcone$ contains a lattice direction.

Proceeding as in Section~2.7 of~\citep{CaIoVe1}, we can show that, up to exponentially small $\Plf{x}$-probabilities, a uniformly strictly positive fractions of the $(h,\delta)$-cone points of the trunk of $\hat\gamma_K$ are actually $(h,\delta)$-cone points of the path.
\begin{theorem}
\label{ConePointsOfPaths}
Let $\#_{h,\delta}^{\scriptscriptstyle\rm cone}(\gamma)$ be the number of $(h,\delta)$-cone points of $\gamma$. There exist $\delta\in(0,\tfrac13)$ and two positive numbers $c$ and $\nu$, depending only on $d,\delta$ and $\lambda$, such that
\[
\Plf{x} (\#_{h,\delta}^{\scriptscriptstyle\rm cone}(\gamma) < c \|x\| ) \leq e^{-\nu \|x\|},
\]
uniformly in $x\in\bbZ^d$, $h\in\partial\Kl$ dual to $x$, and $K$ large enough.
\end{theorem}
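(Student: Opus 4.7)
The plan is to bootstrap the $(h,\delta)$-cone points of the skeleton $\hat\gamma_K$ (aperture $2\delta$) supplied by the preceding lemmas into $(h,\delta)$-cone points of the microscopic path $\gamma$ (aperture $3\delta$). Throughout, $K$ is a large but fixed constant, so $\|x\|/K$ and $\|x\|$ differ only by a universal factor; it will therefore suffice to produce a positive density of path cone points per skeleton cone point.

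Step 1 is a geometric localisation. Suppose $u_k$ is an $(h,\delta)$-cone point of $\hat\gamma_K$ with aperture $2\delta$. Using sub-additivity of $\sur$, the bound $\sur(z)\leq 2\xl(z)$ (consequence of $h\in\partial\Kl$), and $\xl(z)\leq K$ for $z\in K\Ul$, a short computation yields a constant $L=L(\delta)$ such that any point in the $K\Ul$-ball around a skeleton vertex $u_j$ with $|j-k|\geq L$ belongs to the appropriate half of the $3\delta$-double-cone at $u_k$. Consequently, whether $u_k$ is an $(h,\delta)$-cone point of $\gamma$ depends only on the microscopic portion of $\gamma$ joining $u_{k-L}$ to $u_{k+L}$. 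Denote by $\calG_k$ the event that this portion is contained in $(u_k+\tbcone)\cup(u_k+\tfcone)$.

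Step 2 is a uniform lower bound $\Plf{x}(\calG_k \mid \hat\gamma_K) \geq p_0 = p_0(d,\delta,\lambda) > 0$, valid for all admissible skeletons in which $u_k$ is a cone point. To establish it, I would exhibit an explicit realisation of the $2L$ microscopic subpaths joining $u_{k-L},\ldots,u_{k+L}$ which stays inside the $3\delta$-cone at $u_k$ (such a realisation exists since $\tfcone$ is assumed to contain a lattice direction) and compare its weight to the total weight of all admissible realisations of the same $2L$ subpaths. In the repulsive case this comparison is controlled by the sub-multiplicative bound \eqref{Rproduct}; in the attractive case by \eqref{Wfbound}, where splitting off $2L+1$ pieces costs at most the constant factor $e^{(2L+1)\phi(1)}$, which is absorbed into $p_0$.

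Step 3 combines independence with concentration. By the preceding lemmas, outside an event of $\Plf{x}$-probability at most $e^{-\nu_0\|x\|}$ the trunk is admissible and carries at least $c_1\|x\|/K$ $(h,\delta)$-cone points of $\hat\gamma_K$. Thinning this collection by retaining every $(2L+1)$-th vertex yields a subfamily $\calS$ of size at least $c_1\|x\|/((2L+1)K)$ whose local windows from Step 1 are pairwise disjoint. The events $\{\calG_k\}_{k\in\calS}$ then depend on non-overlapping portions of $\gamma$, and the argument of Step 2 shows that, conditionally on $\hat\gamma_K$, they stochastically dominate an i.i.d.\ Bernoulli$(p_0)$ family. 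A Bernstein inequality delivers $(p_0/2)|\calS|\geq c\|x\|$ realisations, each yielding a distinct $(h,\delta)$-cone point of $\gamma$, up to an additional event of exponentially small probability. The main obstacle is the decorrelation claim underlying Steps 2--3 in the attractive case, where $\Phi$ couples distant portions of $\gamma$ through shared local times; the bound \eqref{Wfbound} is designed precisely to disentangle this coupling, turning the global weight into a product over path segments at the price of a constant multiplicative error per split, which is harmless because $L$ and the spacing are fixed.
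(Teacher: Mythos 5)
Your three-step architecture (localisation of the cone-point property to a window of boundedly many skeleton steps, a uniform per-window estimate, aggregation over well-separated windows) has the right shape, and Step 1 together with the aggregation scheme is in the spirit of the argument the paper imports from Section~2.7 of \citep{CaIoVe1}. However, the per-window estimate is obtained there by a pure \emph{upper} bound on the failure event, not by the finite-energy insertion you propose, and this difference matters: your Step 2 has a genuine gap in the repulsive case. To compare the surgered configuration $\gamma_{\rm out}\cup\eta^*$ with the original one you need a \emph{lower} bound of the form $\Wf{\lambda}(\gamma_{\rm out}\cup\eta^*)\geq c\,\Wf{\lambda}(\gamma_{\rm out})$, whereas \eqref{Rproduct} only provides upper bounds on weights of concatenations. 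For SAW the inserted segment $\eta^*$ may intersect the (temporally distant but spatially nearby) remainder of the path, in which case the surgered weight is $0$; for Domb--Joyce it drops by the factor ${\rm e}^{-\beta\sum_x l_x(\gamma_{\rm out})l_x(\eta^*)}$, which is not uniformly bounded below. Symmetrically, in the attractive case the upper bound on the denominator of your conditional probability requires an estimate of the type \eqref{Wfbound}, whose hypothesis --- the pieces share at most one endpoint --- is not satisfied by the generic sub-paths of a path compatible with a skeleton. The intended route avoids insertion entirely: if a skeleton cone point $u_k$ fails to be a cone point of $\gamma$, some excursion attached to skeleton vertices lying in the $2\delta$-cone must reach a point outside the $3\delta$-cone near the apex, and (by \eqref{Hlxi} together with the one-sided multiplicativity bounds, exactly as in \eqref{PgammaK} and \eqref{PthK}) each such excursion costs an extra surcharge factor ${\rm e}^{-c\delta K}$; choosing $K$ large then beats the entropy $2^{\#(\frt_K)}$ of selecting which cone points fail.

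The second soft spot is Step 3: disjointness of the windows does not yield independence, since $\Plf{x}$ is not a product measure over temporal segments of the path ($\Phi$ couples them through shared local times, and the conditioning on $\calH_x$ and on the skeleton is global). Stochastic domination by an i.i.d.\ Bernoulli family requires the per-window estimate to hold \emph{conditionally on the entire configuration outside the window}, which is exactly the insertion bound of Step 2 and inherits its problem; the surcharge route instead bounds $\Plf{x}\bigl(\cap_{i\in I}\{u_{k_i}\ \text{fails}\}\mid\hat\gamma_K\bigr)$ by ${\rm e}^{-c\delta K|I|}$ simultaneously for all subsets $I$, because the failing excursions live in disjoint windows so that their surcharge costs add under the multiplicative upper bounds. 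A further detail worth flagging in Step 1: indexing the window by skeleton steps ($|j-k|\le L$) presupposes that vertices at large index distance are spatially far from $u_k$, which the cone condition alone does not guarantee; the correct window is spatial (vertices within $O(K/\delta)$ of $u_k$), and bounding its cardinality again requires a local surcharge estimate rather than being automatic.
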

\begin{remark}
\label{RemCone}
Observe that the above estimate still holds true (possibly for a smaller constant $\nu$) when $x$ is replaced by an arbitrary site $y\in\fcone$.
\end{remark}
\subsection{Decomposition of paths into irreducible pieces}
With the help of the Theorem~\ref{ConePointsOfPaths}, we can finally construct the desired decomposition of typical ballistic paths into irreducible pieces.

\begin{figure}[t]
\begin{center}
\includegraphics[height=4cm]{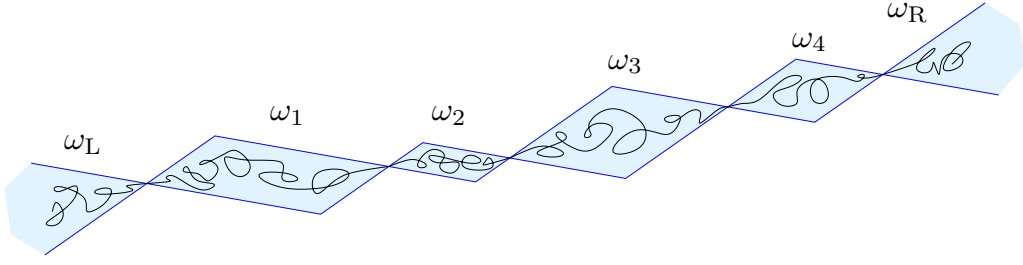}
\end{center}
\caption{The decomposition of a path into irreducible components.}
\end{figure} 
A path $\gamma=(\gamma(0),\ldots,\gamma(n))$ is said to be \emph{$(h,\delta)$-backward irreducible} if $\gamma(n)$ is the only $(h,\delta)$-cone point of $\gamma$. Similarly, $\gamma$ is said to be \emph{$(h,\delta)$-forward irreducible}  if $\gamma(0)$ is the only $(h,\delta)$-cone point of $\gamma$. Finally, $\gamma$ is said to be \emph{irreducible} if $\gamma(0)$ and $\gamma(n)$ are the only $(h,\delta)$-cone points of $\gamma$.

Let $\Omega_{\rm L} ,\Omega$ and $\Omega_{\rm R}$ be 
the corresponding sets of such
irreducible paths.

In view of Theorem~\ref{ConePointsOfPaths}, we can restrict our attention to paths possessing at least $c\|x\|$  $(h,\delta)$-cone points, at least when $\|x\|$ is sufficiently large. We can then unambiguously decompose $\lambda$ into irreducible sub-paths,
\begin{equation}
\label{DecompositionIntoIrreduciblePieces}
\gamma = \omega_{\rm L} \cup \omega_1 \cup \cdots \cup \omega_m \cup \omega_{\rm R}.
\end{equation}
We thus have the following expression
\begin{equation}
\label{DecompositionZx}
e^{(h,x)}\Hl{\lambda}(x) = O\lb e^{-\nu \|x\|}\rb \, +\, 
\sum_{m\geq c\|x\|}\sum_{\omega_{\rm L}\in\Omega_{\rm L}}
\sum_{\omega_1, \dots \omega_m\in\Omega}
\sum_{\omega_{\rm R}\in\Omega_{\rm R}}
\Wf{h ,\lambda }(\gamma )\1_{\lbr D (\gamma )=x\rbr} .
\end{equation}
In fact,  since $e^{(h,y)}\Hl{\lambda}(y)\asymp{\rm e}^{-\sur (y)}$, 
we, in view of Remark~\ref{RemCone}, 
  infer that, perhaps for a smaller choice of $\nu >0$,
\begin{equation}
\label{DecompositionZy}
e^{(h,y)}\Hl{\lambda}(y) = O \lb e^{-\nu \|y\|}\rb \, +\, 
\sum_{m\geq c\|y\|}\sum_{\omega_{\rm L}\in\Omega_{\rm L}}
\sum_{\omega_1, \dots \omega_m\in\Omega}
\sum_{\omega_{\rm R}\in\Omega_{\rm R}}
\Wf{h ,\lambda }(\gamma )\1_{\lbr D (\gamma )=y\rbr} .
\end{equation}
uniformly in $y\in \Zd$.
\subsection{Probabilistic structure of the irreducible decomposition}
\label{ssec_ProbabilisticStructureOfPaths}
We shall treat $\Omega,\, \Omega_{\rm L}$ and $\Omega_{\rm R}$ as probability spaces. In this way various path observables such as, e.g., $D (\omega )$ or $|\omega|$ will be naturally interpreted as random variables. First of all let us try to  rewrite the weights of concatenated paths in the product form,
\be 
\label{productweights}
\Wf{h ,\lambda }(\gamma )\, =\, 
\Wf{h ,\lambda }( \omega_{\rm L} \cup \omega_1 \cup \cdots \cup \omega_m \cup \omega_{\rm R})\, =\, 
\Wfr{h ,\lambda }(\omega_{\rm L})
\Wfr{h ,\lambda }(\omega_{\rm R})
\prod_l
\Wfr{h ,\lambda }(\omega_l )
\ee
If the potential $\Phi$ depends on bond local times only, then $\Wfr{h ,\lambda } = \Wf{h ,\lambda }$ qualifies. However, when $\Phi$ depends on site local times, such a choice would imply overcounting of local times at end-points. In that case, we thus set $\Wfr{h ,\lambda }(\omega) = {\rm e}^{\phi (1)}\Wf{h ,\lambda }( \omega )$ for $\omega\in \Omega_{\rm L}$ and $\omega\in\Omega$, and $\Wfr{h ,\lambda } = \Wf{h ,\lambda }$ on $\Omega_{\rm R}$. Evidently \eqref{productweights} is satisfied. Accordingly, we can rewrite \eqref{DecompositionZy} as 
\begin{equation}
\label{DecompositionZyh}
\begin{split}
{\rm e}^{(h,y)}\Hl{\lambda}(y) &= O\lb e^{-\nu\|y\|}\rb \, \\
&+\, 
\sum_{m\geq c\|y\|}\sum_{\omega_{\rm L}\in\Omega_{\rm L}}
\sum_{\omega_1, \dots \omega_m\in\Omega}
\sum_{\omega_{\rm R}\in\Omega_{\rm R}}
\Wfr{h ,\lambda }(\omega_{\rm L})
\Wfr{h ,\lambda }(\omega_{\rm R})
\prod_1^m 
\Wfr{h ,\lambda }(\omega_l)
\1_{\lbr D (\gamma )=y\rbr} .
\end{split}
\end{equation}
uniformly in $y\in \Zd$. Since the sums over $\omega_L$ and $\omega_R$ converge (by Theorem~\ref{ConePointsOfPaths}), and 
\[
\Kl = \overline{\{h\in\bbR^d \,:\, \sum_{y\in\bbZ^d} e^{(h,y)}\Hl{\lambda}(y) < \infty\}} ,
\]
 it follows that $h\in\partial\Kl$ if and
 only if $\sum_{\omega\in\Omega} \Wfr{h ,\lambda }(\omega) = 1$, and 
thus $\Wfr{h ,\lambda }$ is a probability measure on $\Omega$. Let us
use the notation $\bbQ^{h ,\lambda} = \Wfr{h ,\lambda }$ to stipulate 
this fact. Similarly we shall use the notation 
$\bbQ^{h ,\lambda}_{\rm L}$ and $\bbQ^{h ,\lambda}_{\rm R}$ for the 
values of $\Wfr{h ,\lambda }$ on respectively $\Omega_{\rm L}$ and 
$\Omega_{\rm R}$. 

In general $\bbQ^{h ,\lambda}_{\rm L}$ and $\bbQ^{h ,\lambda}_{\rm R}$ are not probability measures. However, together with $\bbQ^{h ,\lambda}$, they display exponential tails both in the displacement variable $D (\omega )$ and in the number of steps $|\omega|$ (as well as in many other path observables of interest).
Indeed, as follows from Theorem~\ref{ConePointsOfPaths} and in view of the $\tcone$-confinement properties of irreducible paths, all three measures in question already display exponential tails in the displacement variable $D(\omega )$. On the other hand, the weights $\Wfr{h ,\lambda}$ are bounded above as 
\[
 \Wfr{h ,\lambda} (\omega)\, \leq\, {\rm e}^{\|h\|\| D(\omega )\|-\lambda |\omega|
+ \phi (1)} .
\]
Consequently, there exists $c_6= c_6 (\lambda ,d )$, such that
\[
\Wfr{h ,\lambda} \bigl( \| D(\omega )\| = \ell, |\omega | > 2(\ell\|h\| +\phi (1))\bigr)\,
\leq\, c_6{\rm e}^{-\ell\| h\| (\lambda -\lambda_0 )} .
\]
Together with the already established exponential tails of $D(\omega )$, this readily implies exponential tails for the variable $|\omega |$ as well. Exactly the same line of reasoning applies to $\bbQ^{h ,\lambda}_{\rm L}$ and $\bbQ^{h ,\lambda}_{\rm R}$.

We use $\bbQ^{h ,\lambda}_m$ for the product probability measure on $\times_1^m\Omega$. Then
\eqref{DecompositionZyh} in its final form looks like
\be 
\label{DecompositionZyFinal}
{\rm e}^{(h,y)}\Hl{\lambda}(y) = O\lb e^{-\nu\|y\|}\rb  +
\sum_{m\geq c\| y\|}
\bbQ^{h ,\lambda}_{\rm L}\star\bbQ^{h ,\lambda}_{\rm R}\star\bbQ^{h ,\lambda}_m
\lb D (\omega_{\rm L}) +D (\omega_{\rm R}) +\sum_1^m
D(\omega_l ) = y\rb.
\ee
Similarly, let $F$ be some functional on paths, e.g., $F(\gamma ) = |\gamma|$ or, for a change, $F(\gamma ) = \sum_{x}\1_{\lbr l_x (\gamma )>1\rbr}$. We then have the following expression for the restricted partition
functions, 
\be 
\label{DecompositionF}
{\rm e}^{(h,y)}\Hl{\lambda}(y; F(\gamma )=f) = O\lb e^{-\nu\|y\|}\rb
 +
\sum_{m\geq c\| y\|}
\bbQ^{h ,\lambda}_{\rm L}\star\bbQ^{h ,\lambda}_{\rm R}\star\bbQ^{h ,\lambda}_m
\lb D(\gamma ) = y ;F(\gamma )=f\rb .
\ee
The point is that, for good local functionals $F$ such as in the two instances above, sharp asymptotics simply follow from local limit properties of the product measure $\bbQ^{h ,\lambda}_m$.

\bigskip
Note that a very similar analysis applies for partition functions $\Dl{\lambda}(\cdot)$. In particular, \eqref{DecompositionF} holds for $\Dl{\lambda}(y; F(\gamma )=f$ with the very same measures $\bbQ^{h ,\lambda}_{\rm L}$, $\bbQ^{h ,\lambda}$ and the appropriate modification of $\bbQ^{h ,\lambda}_{\rm R}$.

\section{Proof of the main result}
\subsection{Large deviation rate function}
Since
\[
 \Lambda (h)\, \df\, \lim_{n\to\infty} \frac1n\log\Znf{h}
\]
is well defined (either by sub- or by super-additivity) and the
distribution of the displacement per step $D (\gamma )/n$ under 
$\Pnf{h}$ is certainly exponentially tight, the random variables 
$D (\gamma )/n$ satisfy a LD principle with rate function
\[
 J_h (u )\, =\, \sup_g\lbr (g,u ) - \Lambda_h (g)\rbr ,
\]
 where $\Lambda_h (g) = \Lambda (h+g ) - \Lambda (h)$. 

\noindent
We claim that:
\be 
\label{Knot}
\Lambda \, \equiv\, \lambda_0 \quad\text{on}\ \Kln .
\ee
Consequently, if $h\in {\rm int}\lb \Kln\rb$, then $\Lambda_h\equiv 0$ in 
a neighbourhood of the origin and hence there exists $\alpha =\alpha (h)$
 such that $J_h (u)\geq \alpha \| u\|$, which is the sub-ballistic part 
of our Main Result.

\noindent
Furthermore, we claim that if $h\not\in \Kln$, then $\Lambda$ is real 
analytic and strictly convex in a neighbourhood of $h$. In addition, 
\be 
\label{StrictConv}
\bar{v}_h\, =\, \nabla\Lambda (h)\neq 0\quad\text{and}\quad
{\rm d}^2\Lambda (h)\ \text{is non-degenerate} .
\ee
In fact, for $h\not\in\Kln$, we shall see that the log-moment generating function satisfies $\Lambda (h) = \lambda (h)$ with $\lambda = \lambda (h)$ being recovered from $h\in\partial\Kl$. Since $\Kln = \cap_{\lambda >\lambda_0}\Kl$, \eqref{Knot} is an immediate consequence: Indeed, since $\Lambda$ is convex and, by Jensen inequality, $\Znf{h}\geq\Zn\asymp {\rm e}^{\lambda_0 n}$, it satisfies $\Lambda\geq \lambda_0$.
\smallskip

\noindent
For the rest of this section we shall, therefore, focus on the case 
$h\not\in\Kln$.

\subsection{Surface $\lambda =\lambda (h) =\Lambda (h)$} Our next task is 
to explain \eqref{StrictConv}.   Let $\lambda >\lambda_0$ and $h\in\partial\Kl$.
As we shall see below, there exists $\Psi (h) >0$, such that, 
\be 
\label{Apriori}
{\rm e}^{-\lambda n}\Znf{h}\, =\, \sum_{\gamma} \Wf{h ,\lambda}(\gamma )\, =\, 
\Psi (h)\lb 1+\so \rb .
\ee 
This immediately implies that $\Lambda (h) = \lambda$.
Let us for the moment accept \eqref{Apriori} as an a priori lower
bound on ${\rm e}^{-\lambda n}\Znf{h}$ (see the paragraph 
 after \eqref{iidsum} below). Then we are able to rule out
$\Pnf{h}$-negligible $x$-s  as follows:

\noindent
1) Let $0 <l<\lambda -\lambda_0$. Then,
\[
 {\rm e}^{-\lambda n}\Znf{h}\lb (h,\gamma (n))\leq ln\rb\, \leq\, 
{\rm e}^{-(\lambda -l)n}\Zn .
\]
Since $\Zn \asymp {\rm e }^{\lambda_0n }$, the latter expression is 
exponentially small and we can ignore $x$-s such that $(h,x)\leq ln$.
\smallskip

\noindent
2) Let $(h,x ) >ln$, but $x\not\in \fcone$ (see Section~\ref{Decomposition}).
Then $Z_{n,x}^{h,\lambda} \leq Z_{x}^{h,\lambda} \leq {\rm e}^{-\nu \|x\|}$,
 and, consequently, we can ignore such $x$-s as well. 
\smallskip

\noindent
It remains to consider $x\in\fcone$ with $\| x\| > l n/\|h\| \df c_1 n$.
For such $x$-s, \eqref{DecompositionF}, or rather its modification in the case of $\Dl{\lambda}$-partition functions, implies that
\begin{align}
\Znx^{h,\lambda}
&= \sum_{\substack{\gamma\in\calD_x\\|\gamma|=n}} \Wf{h,\lambda}(\gamma)\nonumber\\
&= O\lb e^{-c_1\nu  n}\rb + \sum_{m\geq c \|x\|} \bbQ^{h,\lambda}_{\rm L} \star \bbQ^{h,\lambda}_{\rm R} \star \bbQ^{h,\lambda}_m \bigl(D(\gamma)=x, |\gamma|=n \bigr)\nonumber\\
&=
C^\prime (h ) 
\sum_{m\geq c\|x\|}
\bbQ^{h,\lambda}_m \Bigl( \sum_{i=1}^m (D(\omega_i),|\omega_i|) = (x ,n)\Bigr),
\label{iidsum}
\end{align}
where in the last line, apart from 
ignoring the $O\lb e^{-c_1\nu  n}\rb$ term, we have (
already anticipating local limit behaviour under $\bbQ^{h,\lambda}_m$)
 summed out the terms involving 
 $\bbQ^{h,\lambda}_{\rm L}$ and $\bbQ^{h,\lambda}_{\rm R}$. And 
indeed, since the random vector $V (\omega ) = (D(\omega ),|\omega |)$ has exponential moments under $\bbQ^{h,\lambda}$, we can apply the classical local CLT to~\eqref{iidsum}, which, after summing up with respect to $x$, yields the lower bound in  \eqref{Apriori}, thereby justifying the conclusions 1) and 2) above.

Let now $g\in\partial {\bf K}_\mu$ with $(g ,\mu )$ being sufficiently close to $(h, \lambda)$, say, 
\[
 |\lambda -\mu | +\| g- h\|\, <\, \frac12\min\lbr \nu ,\lambda -l \rbr .
\]
Then 1) and 2) above still describe $\Pnf{g}$-negligible $x$-s. Moreover, 
the $(g,\mu )$-modification of \eqref{iidsum} still holds: For $x\in\fcone$
 with $\| x\| \geq c_1 n$, 
\[
 \Znx^{g,\mu } \lb 1 +\so \rb\, =\, 
C^\prime (g ) 
\sum_{m\geq c_1\|x\|}
\bbQ^{h,\lambda}_m \Bigl( \sum_{i=1}^m (D(\omega_i),|\omega_i|) = (x ,n)\Bigr)
{\rm e}^{(g-h, x) - 
(\mu -\lambda) n}.
\]
(The prefactor $C'(g)$ coming as before from the summation over $\omega_{\rm L}$ and $\omega_{\rm R}$.)
Since we have employed the very same sets of irreducible paths, 
the positive function $C^\prime (g )$ is real analytic.
Now, using an argument similar to the one in Subsection~\ref{ssec_ProbabilisticStructureOfPaths}, together with the fact that the lower bound in \eqref{Apriori} holds for $(g,\mu )$ as well, we deduce that
\[
 \bbQ^{h ,\lambda}\lb {\rm e}^{(g-h, D(\omega))- (\mu -\lambda)|\omega|}\rb
\, =\, 1.
\]
In other words, define
\[
 F (g ,\mu )\, =\, \log \sum_{\omega\in\Omega}{\rm e}^{(g ,D(\omega ))
-\mu |\omega |}\, \Wf{}(\omega).
\]
We have proved:
\begin{lemma}
 Let $\lambda >\lambda_0$ and $h\in\partial \Kl$. Then there exists 
$\rho > 0$ such that the graph of the function $\mu = \Lambda (g)$
over $B_\rho ( h)$ is implicitly given by
\[
 F ( g,\mu )\, =\, 0.
\]
\end{lemma}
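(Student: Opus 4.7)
The plan is to apply the analytic implicit function theorem to $F$ at the point $(h,\lambda)$. The central observation is that, up to a harmless multiplicative constant coming from the normalisation $\Wfr{h,\lambda}$ (relevant only in the site-potential case),
\[
e^{F(g,\mu)}\, =\, \bbQ^{h,\lambda}\lb e^{(g-h,D(\omega))-(\mu-\lambda)|\omega|}\rb .
\]
In particular $F(h,\lambda)=0$, since $\bbQ^{h,\lambda}$ is a probability measure on $\Omega$ (as established just after~\eqref{DecompositionZyh}). My first step would be to verify that $F$ is real-analytic on an open neighbourhood of $(h,\lambda)$: this follows from the exponential tails of $D(\omega)$ and $|\omega|$ under $\bbQ^{h,\lambda}$ established in Subsection~\ref{ssec_ProbabilisticStructureOfPaths}, which guarantee absolute, uniform convergence of the series defining $e^F$ on a complex neighbourhood of $(h,\lambda)$.

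Differentiating in $\mu$ gives
\[
\partial_\mu F(h,\lambda)\, =\, -\bbQ^{h,\lambda}(|\omega|)\, <\, 0 ,
\]
the inequality being strict because $|\omega|\geq 1$ on $\Omega$. The analytic implicit function theorem then produces $\rho>0$ and a unique real-analytic function $\mu^{\star}\colon B_\rho(h)\to\bbR$ with $\mu^{\star}(h)=\lambda$ such that, on a small enough box $B_\rho(h)\times(\lambda-\rho,\lambda+\rho)$, the zero set $\{F=0\}$ coincides with the graph of $\mu^{\star}$.

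It remains to identify $\mu^{\star}$ with $\Lambda$ on $B_\rho(h)$. For this I would invoke the identity $\bbQ^{h,\lambda}(e^{(g-h,D(\omega))-(\Lambda(g)-\lambda)|\omega|})=1$ proved in the paragraph immediately preceding the lemma, which rephrases as $F(g,\Lambda(g))=0$ for every $g$ in some neighbourhood of $h$. Uniqueness in the implicit function theorem then forces $\Lambda(g)=\mu^{\star}(g)$. I expect the only genuinely delicate routine step to be the verification that the exponential-tail bounds and the cone-point construction underlying $\bbQ^{h,\lambda}$ are stable under small perturbations of the parameters, which is needed both for real-analyticity of $F$ and for the $(g,\mu)$-variant of~\eqref{iidsum} used in that preceding paragraph; since these constructions depend continuously on $(h,\lambda)$, this uniformity essentially comes for free from the arguments already deployed.
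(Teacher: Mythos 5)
Your proposal is correct and follows essentially the same route as the paper: the substantive input---the identity $\bbQ^{h,\lambda}\bigl(e^{(g-h,D(\omega))-(\mu-\lambda)|\omega|}\bigr)=1$ for $(g,\mu)$ near $(h,\lambda)$ together with $\Lambda(g)=\mu$, both obtained from the $(g,\mu)$-version of \eqref{iidsum} and the a priori asymptotics \eqref{Apriori}---is exactly the argument the paper gives in the paragraphs preceding the lemma, and your contribution is to make explicit the implicit-function-theorem bookkeeping ($F(h,\lambda)=0$, analyticity of $F$ from the exponential tails, $\partial_\mu F(h,\lambda)=-\bbQ^{h,\lambda}(|\omega|)<0$) that the paper leaves tacit and then uses anyway to deduce \eqref{StrictConv}. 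One minor remark: analyticity of $F$ near $(h,\lambda)$ requires only the exponential tails under the single measure $\bbQ^{h,\lambda}$, since $\Omega$ is a fixed set and the tilting factor is entire in $(g,\mu)$, so the parameter-stability issue you raise at the end is genuinely needed only for the $(g,\mu)$-variant of \eqref{iidsum}, not for the analyticity of $F$ itself.
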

Since the distribution of the random vector $V$
under $\bbQ^{h ,\lambda}$ is obviously non-degenerate and has finite 
exponential moments in a neighbourhood of the origin, \eqref{StrictConv}
follows. Furthermore, there exists $\epsilon > 0$ such that
\be 
\label{Nablamap}
B_\epsilon (\bar{v}_h )\, \subset\, \cup_{g\in B_\rho (h)}\nabla\Lambda (g) .
\ee

\qed

\subsection{Local limit result in the ballistic regime}
We already know that under $\Pnf{h}$ the average displacement $D (\gamma )/n$ satisfies a LD principle with strictly convex rate function $J_h$. Thereby, \eqref{Ballistic} is justified. Let us restrict our attention to $x\in B_{\epsilon n} (n\bar{v}_h)$ and go back to \eqref{iidsum}. Obviously, $(\bar{v}_h ,1)$ is parallel to $\sfz = (\bar{w} ,\bar{t})\df \bbQ^{h ,\lambda}\lb (D(\omega),|\omega|) \rb$. Set $x=\lfloor n\bar{v}_h \rfloor$. By the local CLT and Gaussian summation formula applied to \eqref{iidsum},
\be 
\label{Znxsum}
 \Znx^{h,\lambda}\, =\, \frac{C(h)}
{\sqrt{n^d}}\, \lb 1+\so\rb .
\ee
Let $y\in\B_{n\epsilon}(n\bar{v}_h )\cap\Zd$ and  $u\df y/n\in B_\epsilon (\bar{v}_h )$. By \eqref{Nablamap}, we can find $g\in B_\rho (h)$ such that $ u =\nabla\Lambda (g )$. Set $\mu = \Lambda (g)$ and $y =\lfloor nu\rfloor$.
As in \eqref{Znxsum}, 
\be 
\label{Znysum}
Z_{n ,y}^{g,\mu}\,  = \, \frac{C (g)}{\sqrt{n^d}}\lb 1+\so\rb ,
\ee
where $C$ is a positive real analytic function 
on $ B_\rho (h)$. On the other hand, 
\[
Z_{n ,y}^{h ,\lambda}\, =\, {\rm e}^{-n\lb (g-h, u) +\Lambda (h) -\Lambda (g)\rb}
 Z_{n ,y}^{g,\mu} .
\]
It remains to notice that
\[
(g-h, u) +\Lambda (h) -\Lambda (g)\, =\, J_h (u) . 
\]
Since $C$ in \eqref{Znysum} is analytic (continuous would be enough) and 
$J_h$ has quadratic minimum at $\bar{v}_h$, the partition function 
asymptotics \eqref{Apriori} follows from Gaussian summation 
formula. \eqref{BeAsympt} is proved.\qed

\section{Perturbations by small potentials and statistics of patterns}
Let $\Phi$ be either an attractive or a repulsive potential of the
type considered above, $\lambda >\lambda_0$ and $h\in\partial\Kl$. 
Let $\calU_h\subset \Rd$ be a neighbourhood of $h$.

We would like to consider perturbations of $\Phi$ of the form, 
\be 
\label{PhiTilda}
 \widetilde{\Phi} (\gamma )\, =\, 
\Phi (\gamma )\, +\, R (\gamma , h ) .
\ee
We shall assume that for each fixed $\gamma$ the function 
$g\mapsto R(\gamma , g )$ is analytic on $\calU_h$ and that 
it is appropriately negligible: For some $\epsilon $ sufficiently
small,  
\be 
\label{Rsmall}
\sup_{g\in \calU_h}\left| R(\gamma , g )\right|\, \leq\, \epsilon |\gamma |,
\ee
simultaneously for all $\gamma$.

Furthermore, we shall assume that the perturbation $R$ is in some sense 
local. This could be quantified on various levels of generality and,
in order to fix ideas, we shall restrict our attention to the 
following case: For every $g\in\calU_h$, 
\be 
\label{Rlocal}
R(\gamma_1\cup\dots\cup\gamma_m ,g) \,=\, \sum_1^m 
R(\gamma_i ,g ) ,
\ee
whenever $\gamma_1 ,\dots ,\gamma_m$ are edge disjoint.

\subsection{An example} An immediate example is a random walk with 
small edge reinforcement. Set $F\equiv 0$. For a path
$\gamma = (x_0 ,x_1, x_2, \dots )$ define the running local times 
on {\em unoriented} bonds,
\[
 l_b^t (\gamma )\, =\, \sum_{j=0}^{t-1} \1_{\lbr b = (x_i ,x_{i+1})\rbr} .
\]
Let $\beta :\bbN \mapsto \bbR_+$ be a non-decreasing 
bounded concave function with $\beta (\infty ) =\epsilon$. Set
\[
 R (\gamma ,h)\, =\, 
- \sum_{t=0 }^{|\gamma |-1}
\log \bbE ~{\rm exp}\lbr  \beta (l_{x_i ,x_i + X_h}^t (\gamma )) -
\beta (l_{(x_i ,x_{i+1})}^t (\gamma ))\rbr ,
\]
where the random variable $X_h$ is distributed as the step of a simple random
walk with drift $h$.


\subsection{Ballistic behaviour and local limit theory}  Let
$\widetilde{Z}_n^h$ be the partition function which corresponds 
to the perturbed interaction \eqref{PhiTilda}. We claim that 
the following generalisation of \eqref{Apriori} holds:
\begin{theorem}
Let $\Phi$, $\lambda >\lambda_0$ and $h\in\partial \Kl$ be fixed. 
Then one can choose a number $\epsilon_0 = \epsilon_0 (h , \Phi ) >0$, 
a continuous function $\rho :[0,\epsilon_0]\mapsto\bbR_+$ with 
$\rho (0) = 0$ and 
a neighbourhood $\calU_h$ of $h$, so that:

 For any $\epsilon\leq\epsilon_0$ and for any 
 (analytic) perturbation $\widetilde{\Phi} = \Phi +R $ of $\Phi$
 satisfying \eqref{Rlocal} and \eqref{Rsmall} above, there
exist  an analytic function $f$ with 
\be 
\label{fbound}
\sup_{g\in\calU_h}\left| f(g)\right| \leq\, \rho (\epsilon ) ,
\ee
 and a positive analytic function
 $\widetilde{\Psi}$ on $\calU_h$, such that the following asymptotics holds
\be 
\label{AprioriR}
{\rm e}^{-\lambda (g) n}
\widetilde{Z}_n^g\, =\, \widetilde{\Psi} (g) {\rm e}^{ n f (g )}
\lb 1+\so\rb,
\ee
uniformly in $g\in\calU_h$. The function $f$ can be recovered from the following implicit relation,
\be 
\label{FunctionF}
\log\bbQ^{ g ,\lambda (g)}\lb {\rm e}^{ f(g)|\omega | - R (\omega,g)}\rb\,
=\, 0.
\ee
\end{theorem}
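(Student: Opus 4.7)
The plan is to extend the analysis of Section~4 to the perturbed model, exploiting the locality assumption \eqref{Rlocal} to preserve the multiplicative structure of weights along the irreducible decomposition of Section~\ref{Decomposition}, and the smallness assumption \eqref{Rsmall} to preserve the coarse-graining bounds. First, I would choose $\epsilon_0$ (and the neighbourhood $\calU_h$) small compared with the exponential rates $\nu$, $c$ from Theorem~\ref{ConePointsOfPaths}. The perturbation contributes only a factor $e^{\pm\epsilon|\gamma|}$ to the weights, degrading the surcharge inequality, the cone-point density bound, and the irreducible-piece tail estimates of Subsection~\ref{ssec_ProbabilisticStructureOfPaths} by at most $O(\epsilon)$; all conclusions of Section~\ref{Decomposition} then remain valid for the perturbed model.

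Consecutive irreducible pieces in a cone-point decomposition share only the common cone vertex and are therefore edge-disjoint, so \eqref{Rlocal} yields $R(\omega_{\rm L}\cup\omega_1\cup\cdots\cup\omega_m\cup\omega_{\rm R},g)=R(\omega_{\rm L},g)+\sum_i R(\omega_i,g)+R(\omega_{\rm R},g)$. Hence the analogue of \eqref{productweights} holds with each factor $\Wfr{g,\lambda(g)}(\omega)$ replaced by $\Wfr{g,\lambda(g)}(\omega)e^{-R(\omega,g)}$, and the total mass on $\Omega$ becomes $\bbQ^{g,\lambda(g)}(e^{-R(\omega,g)})$, which is $O(\epsilon)$-close to (but in general not equal to) $1$. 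The task is thus reduced to finding an exponential tilt $e^{f(g)|\omega|}$ that restores normalisation.

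I would then set $F(g,f):=\log\bbQ^{g,\lambda(g)}(e^{f|\omega|-R(\omega,g)})$ and apply the implicit function theorem to $F(g,f)=0$. Joint analyticity of $F$ near $(h,0)$ is inherited from the analyticity of $\bbQ^{g,\lambda(g)}$ in $g$ (implicit in the analysis around \eqref{iidsum}), the assumed analyticity of $R(\gamma,\cdot)$, and the exponential tails of $|\omega|$ under $\bbQ^{g,\lambda(g)}$ established in Subsection~\ref{ssec_ProbabilisticStructureOfPaths}. Since $F(g,0)=O(\epsilon)$ by \eqref{Rsmall} and $\partial_f F(g,0)=\bbQ^{g,\lambda(g)}(|\omega|)>0$ uniformly on $\calU_h$, the theorem produces a unique analytic $f=f(g)$ solving \eqref{FunctionF} on $\calU_h$, satisfying $|f(g)|\leq\rho(\epsilon)$ with $\rho(0)=0$, as required by \eqref{fbound}.

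With this $f$, the measure $\widehat{\bbQ}^{g,\lambda(g)}(\omega):=\bbQ^{g,\lambda(g)}(\omega)e^{f(g)|\omega|-R(\omega,g)}$ is a probability measure on $\Omega$ whose step length $|\omega|$ retains exponential moments. Pulling $e^{nf(g)}$ out of the irreducible decomposition \eqref{DecompositionZyh}, one obtains
\[
e^{-\lambda(g)n}\widetilde Z_n^g = e^{nf(g)}\,A(g)\sum_m\lb\widehat{\bbQ}^{g,\lambda(g)}\rb^{\star m}(|\gamma|=n)(1+\so),
\]
with $A(g)$ an analytic boundary factor built from the exponentially tight sums over $\Omega_{\rm L}$ and $\Omega_{\rm R}$; the classical one-dimensional local CLT applied to the i.i.d.\ sequence $\{|\omega_i|\}$ under $\widehat{\bbQ}^{g,\lambda(g)}$ then evaluates the sum as an analytic function of $g$, yielding \eqref{AprioriR} with analytic $\widetilde\Psi(g)$. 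The main obstacle I expect is uniformity in $g\in\calU_h$: one needs a single $\epsilon_0$ and a single neighbourhood on which the degraded coarse-graining estimates, the implicit function construction of $f(g)$, and the local CLT all hold simultaneously, which is a matter of careful but routine bookkeeping rather than a new idea.
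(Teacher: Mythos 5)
Your proposal follows essentially the same route as the paper, which itself only sketches the argument as ``a rerun of the pure cases'': locality \eqref{Rlocal} preserves factorisation over the irreducible decomposition, smallness \eqref{Rsmall} gives exponential tails of $R(\omega,g)$ under $\bbQ^{g,\lambda(g)}$ so that the tilted measure retains exponential moments, the normalising tilt $f(g)$ is produced by the implicit relation \eqref{FunctionF}, and the asymptotics \eqref{AprioriR} follow from the local CLT for the i.i.d.\ sums of $(D(\omega),|\omega|)$ together with the bootstrap from the a priori lower bound. Your write-up in fact makes the implicit-function-theorem step more explicit than the paper does; the only point to keep in mind is that passing from the fixed-endpoint decomposition of Section~\ref{Decomposition} to the canonical partition function $\widetilde Z_n^g$ requires the same a priori-lower-bound argument used in the unperturbed case to discard non-ballistic endpoints, which you implicitly subsume under ``extending the analysis of Section~4.''
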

The proof and the implications of the above theorem 
essentially boil down to 
a rerun of the arguments employed in the pure (repulsive or attractive)
cases: The crux of the matter is that under assumption \eqref{Rsmall}
the random variable $R(\omega,h)$ on the probability space
$(\Omega ,\bbQ^{h ,\lambda})$ has exponential tails, whereas under
 assumption \eqref{Rlocal} all the relevant asymptotics are settled 
via local limit theorems for independent random variables.  Let us briefly
 reiterate the principal steps involved:

\noindent
\step{1} Assume \eqref{AprioriR} as an a priori lower bound on ${\rm e}^{-\lambda (g) n}
\widetilde{Z}_n^g$ and use it to rule out trajectories $\gamma = (\gamma (0), 
\dots ,\gamma (n))$ which fail to comply with:
\smallskip

\noindent
1) $\min\lbr (\gamma (n) ,g),\| \gamma (n)\|\rbr \geq c_1n$ and 
$\gamma_n\in\fconeh{h}$.
\smallskip

\noindent
2) $\gamma$ has at least $m\geq c_2 n$ irreducible pieces, 
\[
 \gamma = \omega_{\rm L}\cup\omega_1\cup\dots\cup\omega_m\cup \omega_{\rm R} .
\]
\step{2} By Assumption~\eqref{Rsmall}, the random variables $D(\omega )$ and 
$|\omega |$ have exponential tails under the modified measures 
$\widetilde{\bbQ}^{g ,\lambda (g)}$, 
\[
\widetilde{\bbQ}^{g ,\lambda (g)} (\omega )\, =\, 
\frac{\bbQ^{g ,\lambda (g)} (\omega){\rm e}^{-R (\omega,g)}}{
 \bbQ^{g ,\lambda (g)}\lb {\rm e}^{-R (\cdot,g)}\rb}.
\]
Consequently, both the lower bound in \eqref{AprioriR} and then 
\eqref{AprioriR} itself follow from the local limit analysis of i.i.d.
 sums $\sum_1^m V_i$, where $V (\omega ) = (D (\omega ) ,|\omega |)$. In particular, the
 limiting  log-moment generating functions
\[
 \widetilde{\Lambda}(g)\, =\, \lim_{n\to\infty}\frac{\log \widetilde{Z}_n^g}{n}
\, \df \, \lambda (g )+ f(g) ,
\]
are analytic on $\calU_h$, whereas the asymptotic speed $\tilde{v}_n = 
\nabla\widetilde{\Lambda}(h)$ has a positive projection on $h$ by 
\step{1}.  The local limit description of the distribution of the
end-point $\gamma (n)$ under the perturbed measure 
$\widetilde{\bbP}_n^h$ follows exactly as in the pure (repulsive or attractive)
 case. In particular, we arrive to the following local limit 
description of  ballistic behaviour under  $\widetilde{\bbP}_n^h$:
 There exists $\epsilon^\prime >0$, such that:
\begin{itemize}
 \item  Outside $B_{\epsilon^\prime} (\tilde{v}_h)$, 
\[
\widetilde{\bbP}_n^h\lb \frac{D(\gamma )}{n}\, \not\in\, 
B_{\epsilon^\prime} (\tilde{v}_h)\rb  \, \leq\,
{\rm e}^{-c_3 n} .
\]
\item For $nu\in B_{n\epsilon^\prime}(n\tilde{v}_h) \cap\Zd$, 
\[ 
\widetilde{\bbP}_n^h\lb D(\gamma ) = nu\rb\, =\, 
\frac{\tilde{G} (u)}{\sqrt{n^d}} {\rm e}^{ -n\tilde{J}_h (u)}
\lb 1+\so \rb ,
\]
where the rate function $\tilde{J}_h$ 
on $B_{\epsilon^\prime} (\tilde{v}_h)$
is quadratic at its 
unique minimum $\tilde{v}_h$.
\end{itemize}
\qed
\subsection{Statistics of finite patterns and local observables}
A finite pattern is a fixed nearest neighbour (and self-avoiding in case 
of SAW-s)  path, 
\[
 \eta\,=\,  \lb u_0 ,\dots ,u_p\rb .
\]
For example, we may take $\eta$ as an elementary loop, 
\be 
\label{loop}
\eta\, =\, (0, \sfe_1 , \sfe_1 +\sfe_2 ,\sfe_2 ) .
\ee
Given a trajectory $\gamma$ define $N_\eta (\gamma )$ as the 
number of times $\eta$ is re-incarnated in $\gamma$, 
\[
N_\eta (\gamma )\, =\, \sum_{l=0}^{|\gamma | - p}\1_{\lbr (\gamma (l) ,
\dots ,\gamma (l+p )) \stackrel{{\rm m}}{=}\eta\rbr} 
\]
where $\stackrel{{\rm m}}{=}$ means matching up to a shift.  Obviously, 
$N(\gamma )\leq |\gamma |$. Also, in  the case of  \eqref{loop}, 
\be 
\label{Nadditive}
N_\eta (\gamma )\, =\, N_\eta (\omega_{\rm L}) + N_\eta (\omega_1 )
+\dots + N_\eta (\omega_m ) + N_\eta (\omega_{\rm R}) ,
\ee
whenever $\gamma$ is given in its irreducible representation 
\eqref{DecompositionIntoIrreduciblePieces}. 

For  general patterns $\eta$ the relation  \eqref{Nadditive} does
not necessarily hold for the particular irreducible decomposition 
which was constructed in Section~\ref{Decomposition}. We could not 
have worried  
less: First of all given {\em any} finite pattern $\eta$ we could have 
adjusted the notion of irreducible decomposition in such a way that 
\eqref{Nadditive} would become true.  Furthermore, the  import 
of \eqref{Nadditive} is a possibility to work with independent 
random variables. In the situation we consider here  sums of finitely
 dependent variables have qualitatively the same local asymptotics
 as sums of i.i.d.-s.  So, for the sake of exposition, let us assume that
 $\eta$ satisfies \eqref{Nadditive}.

But then, given some small $\delta>0$, $R (\gamma ) = \delta N_\eta (\gamma )$ qualifies as a small
perturbation of the type considered in the preceding section. Accordingly,
we are in a position to develop a standard local limit analysis of
restricted partition functions of the type
\[
 \Znf{h}\lb N_\eta (\gamma )\, =\, \lfloor nx\rfloor\rb .
\]
\begin{theorem}
 Let $\eta$ be a finite pattern, $\lambda >\lambda_0$ and $h\in\partial\Kl$.
Then there exist $x_\eta\in (0,1)$, $\epsilon >0$, $\nu >0$
 and a rate function $J_h^\eta$ on $(x_\eta -\epsilon ,x_\eta +\epsilon )$
with quadratic minimum at $x_\eta$, 
such that, 
\[
 \bbP_n^h \lb \left| \frac{N_\eta (\gamma )}{n} - x_\eta\right| \geq\epsilon\rb
\, \leq\, {\rm e}^{-\nu n} , 
\]
and, for $x\in (x_\eta -\epsilon ,x_\eta +\epsilon )$, 
\[
 \bbP_n^h \lb  N_\eta (\gamma ) = \lfloor nx\rfloor\rb\, =\, 
\frac{G_\eta (x)}{\sqrt{n}}{\rm e}^{-n J_h^\eta (x)}\lb 1+\so\rb , 
\]
where, of course, $G_\eta$ is a positive real analytic 
function on $[x_\eta -\epsilon ,x_\eta +\epsilon ]$
\end{theorem}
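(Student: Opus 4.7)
The plan is to realise $N_\eta$ as a small local perturbation of the kind treated in the preceding subsection. For $\delta$ in a small real neighbourhood of the origin, set $R(\gamma, h; \delta) \df \delta\, N_\eta(\gamma)$. The bound $N_\eta(\gamma) \le |\gamma|$ yields \eqref{Rsmall}, while the assumed additivity \eqref{Nadditive} of $N_\eta$ along the irreducible decomposition is precisely the locality condition \eqref{Rlocal} (for a general finite pattern this may require a one-off shrinking of the cone aperture $\delta$ used in Section~\ref{Decomposition} so that no occurrence of $\eta$ straddles a cone point, but this only affects the constants in the underlying coarse-graining). The perturbation theorem then produces a real analytic function $\delta\mapsto f(\delta)$ on a neighbourhood of $0$, with $f(0)=0$, implicitly defined by
\[
\log \bbQ^{h,\lambda}\!\lb e^{f(\delta)|\omega| - \delta N_\eta(\omega)} \rb \;=\; 0,
\]
and such that the cumulant generating function
\[
\Lambda_\eta(\delta) \;\df\; \lim_{n\to\infty}\frac{1}{n}\log \bbE_n^h\!\lb e^{-\delta N_\eta(\gamma)}\rb
\]
exists, equals $f(\delta)$, and is real analytic in a neighbourhood of $0$.

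By G\"artner--Ellis, $N_\eta(\gamma)/n$ therefore satisfies a large deviation principle under $\Pnf{h}$ with rate function $J_h^\eta$ equal to the Legendre transform of $\delta\mapsto -\Lambda_\eta(-\delta)$. Setting $x_\eta \df -\Lambda_\eta'(0)$, differentiating the implicit equation for $f$ at $\delta=0$ gives
\[
x_\eta \;=\; \frac{\bbQ^{h,\lambda}(N_\eta(\omega))}{\bbQ^{h,\lambda}(|\omega|)} \;\in\; (0,1),
\]
with strict inequalities because (i) at least one cone-compatible irreducible embedding of $\eta$ carries strictly positive $\bbQ^{h,\lambda}$-mass, and (ii) arbitrarily straight irreducible pieces avoiding $\eta$ also carry strictly positive mass. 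Non-constancy of $N_\eta$ on $\Omega$ then gives $\Lambda_\eta''(0)>0$, hence the non-degenerate quadratic minimum of $J_h^\eta$ at $x_\eta$; the exponential concentration estimate is then the standard upper-bound half of the LDP.

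For the sharp asymptotics, let $\delta(x)$ be the unique solution of $-\Lambda_\eta'(\delta(x))=x$ in a neighbourhood of $x_\eta$; by non-degeneracy it is real analytic on $(x_\eta-\epsilon,x_\eta+\epsilon)$. Tilting by $\delta(x)$ and applying \eqref{DecompositionF} with $F=N_\eta$ on the tilted system, the event $\{|\gamma|=n,\, N_\eta(\gamma)=\lfloor nx\rfloor\}$ reduces to the joint constraint $\sum_{i=1}^m V_i = (n,\lfloor nx\rfloor)$ on the i.i.d.\ two-dimensional lattice vectors $V_i\df(|\omega_i|,N_\eta(\omega_i))$ distributed according to the tilted irreducible measure. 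A two-dimensional Stone-type local CLT applied to $\sum_{i=1}^m V_i$, followed by Gaussian summation over the free parameter $m$ --- exactly as in the derivation of \eqref{BeAsympt} --- yields
\[
\Pnf{h}\!\lb N_\eta(\gamma) = \lfloor nx \rfloor \rb \;=\; \frac{G_\eta(x)}{\sqrt{n}}\, e^{-n J_h^\eta(x)}(1+\so),
\]
with $J_h^\eta(x)=\delta(x)\,x+\Lambda_\eta(\delta(x))$ and $G_\eta$ a product of analytic factors (Gaussian normaliser, perturbation prefactor, and Jacobian of the Legendre transform), hence strictly positive and real analytic on $[x_\eta-\epsilon,x_\eta+\epsilon]$.

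The main obstacle is the two-dimensional non-degeneracy of $V(\omega)=(|\omega|,N_\eta(\omega))$ under the (tilted) $\bbQ^{h,\lambda}$, required for the Stone local CLT: one must verify that the covariance matrix of $V$ is non-singular. This amounts to exhibiting two irreducible pieces with the same number of steps but distinct $N_\eta$-counts, which in any dimension $d\ge 1$ is a direct geometric construction --- splice (or remove) a single copy of $\eta$ in an otherwise identical cone-compatible skeleton, compensating the length mismatch by a suitable local detour of the same length. Once this is in place, the remainder of the argument is a routine repetition of the local CLT bookkeeping already carried out for the main theorem.
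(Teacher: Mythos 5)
Your proposal is correct and follows essentially the same route as the paper: realise $\delta N_\eta$ as a small local perturbation via $N_\eta(\gamma)\le|\gamma|$ and \eqref{Nadditive}, invoke the implicit relation \eqref{FunctionF} for the analytic free energy correction, and then run the tilting plus local CLT and Gaussian summation bookkeeping over the irreducible decomposition, exactly as in the derivation of \eqref{BeAsympt}. You in fact supply more detail than the paper does on the two points it leaves implicit, namely the strict inclusion $x_\eta\in(0,1)$ and the non-degeneracy of the covariance of $(|\omega|,N_\eta(\omega))$, and your treatment of both is sound.
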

Note that we have used only $| N_\eta (\gamma )|\leq |\gamma |$ and 
\eqref{Nadditive}. Therefore, the above theorem holds for a wider class
of path observables, as for example, 
\[
 N (\gamma )\, =\, \sum_{b}\1_{\lbr l_b (\gamma ) >1\rbr} .
\]

\appendix
\section{Existence of Lyapunov exponents}
\subsection{Attractive case}
In the attractive case both $\Dl{\lambda}$ and $\Hl{\lambda}$ are
super-multiplicative, 
\[
 \Dl{\lambda}(x+y)\geq \Dl{\lambda}(x)\Dl{\lambda}(y)\quad\text{and}
\quad 
\Hl{\lambda}(x+y)\geq
\Hl{\lambda}(x)\Hl{\lambda}(y) .
\]
Hence both limits in \eqref{xidef} exist.

Furthermore, since the potential $\Phi$ is monotone in $\gamma$, 
$0\leq \Phi (\gamma ) \leq \Phi (\gamma\cup\eta )$, 
\be 
\label{DHbound}
 \Dl{\lambda}(x)\, \leq\, \Hl{\lambda}(x)\cdot\sum_{\eta :0\mapsto 0}
{\rm e}^{-\lambda |\eta |} .
\ee 
The relation \eqref{lambdan}  immediately implies that the right-most term in 
\eqref{DHbound} is convergent for every $\lambda >\lambda_0$, and, 
consequently, that both limits in \eqref{xidef} are equal. 

\noindent
In order to check \eqref{lambdan} note first of all that since $\Phi$ is 
non-negative, the partition function $\Zn$ is bounded above
by the total number of all $n$-step nearest neighbour trajectories; 
$\Zn\leq (2d)^n$. For the reverse direction, proceeding as in~\citep{FluryLD}, let us fix a number $R$ and note that
in view of the monotonicity of $\phi$, 
\[
\Zn\, \geq\, \frac{\Zn(\gamma\subset B_R)}{(2d)^n}
{\rm e}^{n\log (2d) - |B_R\cap\Zd|\phi (n)} \, =\, 
\bbP_{{\rm SRW}}\lb \gamma\subset B_R\rb 
{\rm e}^{n\log (2d) - |B_R\cap\Zd|\phi (n)} ,
\]
where $\bbP_{{\rm SRW}}$ is the distribution of simple random walk on 
$\Zd$. 
Since~\citep{HrVe}
\[
\bbP_{{\rm SRW}}\lb \gamma\subset B_R\rb \, \geq\, {\rm exp}\lbr
-c (d)\frac{n}{R^2}\rbr ,
\]
we conclude that for all $R$ and $n$,
\[
\frac1n\log\Zn\, \geq\, \log (2d) - \frac{c(d)}{R^2} - |B_R\cap\Zd|
\frac{\phi (n)}{n} .
\]
Thus,  \eqref{lambdan} indeed follows from {\bf (SL)}.

Inequality \eqref{xi:maxmin} is a trivial consequence of convexity 
and lattice symmetries. It remains to check that in the attractive 
case $\xln$ is strictly positive. However, again as in \citep{FluryLD}, 
\[
 \Hl{\lambda}(x)\, =\, \sum_{n= \|x\|_1}^\infty \bbE_{{\rm SRW}}
{\rm e}^{-\Phi (\gamma ) - (\lambda -\lambda_0)n}\1_{\tau_x =n} ,
\]
where $\tau_x$ is the first hitting time of $x$. As a result, 
$\xl  (x) \geq \phi (1) \|x\|_1$ for all 
$\lambda\geq \lambda_0$.\qed
\subsection{Repulsive case} First of all, 
\[
 \Dl{\lambda}(x)\leq \Hl{\lambda}(x)\Dl{\lambda}(0) .
\]
Thus, in view of \eqref{HD}, for 
every $\lambda >\lambda_0$  both exponents in \eqref{xidef} are 
automatically equal once it is proved that at least one of them 
is defined. We shall show that the first limit in 
\eqref{xidef} exists, using a familiar bubble diagram method: Consider
\[
 \Hl{\lambda}(x)\Hl{\lambda}(y)\, =\, 
\sum_{\gamma\in\calH_x}\sum_{\eta\in x+\calH_y} 
{\rm e}^{-\Phi (\gamma ) -\Phi (\eta )} .
\]
Given a couple of trajectories $\gamma\in\calH_x$ and 
$\eta\in x+\calH_y$ define, 
\[
 \underline{n} = \min\lbr l~:~\gamma(l)\in \eta\rbr\quad\text{and}
\quad 
\overline{n} = \max\lbr k~:~ \eta (k) = \gamma (\underline{n} )\rbr .
\]
The concatenated path
\[
 \tilde{\gamma }\, \df\, \lb\gamma (0), \dots ,\gamma (\underline{n} ), 
\eta(\overline{n} +1), \dots ,\eta (|\eta|)\rb\, \df\, 
\gamma_1\cup\eta_1 \, \in\, \calH_{x+y} .
\]
Define also 
\[
 \gamma_2 = \lb \gamma (\underline{n}), \dots, \gamma (|\gamma |)\rb
\quad\text{and}\quad 
\eta_2 = \lb \eta (0), \dots ,\eta (\overline{n})\rb .
\]
Evidently, one can recover $\gamma$ and $\eta$ from $\tilde{\gamma}$, 
$\gamma_2$ and $\eta_2$ up to, perhaps, an interchange of $\gamma_2$ and
$\eta_2$.  Now, since $\gamma_1$ and $\eta_1$ intersect only at the 
end-point, 
\begin{equation*}
 \begin{split}
\Phi (\gamma ) +\Phi (\eta )\, &\geq\,  \Phi(\gamma_1 )+\Phi (\gamma_2) +
\Phi(\eta_1 )+\Phi (\eta_2)\\
&=\, \Phi (\tilde{\gamma} ) - \phi (1) +\Phi (\gamma_2) +\Phi (\eta_2) .
\end{split}
\end{equation*}
As a result, 
\be 
\label{RHbound}
 \Hl{\lambda}(x)\Hl{\lambda}(y)\,  \leq \, 2{\rm e}^{\phi (1)} \lb
\sum_z \Dl{\lambda}(z)^2\rb \Hl{\lambda} (x+y ) .
\ee
But $\sum_z \Dl{\lambda}(z)^2<\infty$ for every $\lambda >\lambda_0$,
and the first limit in \eqref{xidef} is indeed well defined.

It remains to show that in the repulsive case $\xln\equiv 0$.  If 
this is not the case, then, as can be easily deduced from
\eqref{RHbound}, there exists some finite constant $c$ such that
\[
 \Dl{\lambda} (x) \leq c{\rm e}^{- \xl (x)} \leq c{\rm e}^{- \xln (x)} ,
\]
uniformly in $\lambda >\lambda_0$ and $x\in\Zd$. 
 By Fatou this would mean that $\sum_x \Dl{\lambda_0} (x)$ converges.
However, since we are in the repulsive case, $Z_n\geq {\rm e}^{\lambda_0 n}$, 
and consequently,  
\[
 \sum_x \Dl{\lambda_0} (x)\, =\, \sum_n Z_n {\rm e}^{ -\lambda_0 n}\, =\, \infty.
\]
\qed

\bibliographystyle{chicago}
\bibliography{IV-revised}
\end{document}